\newcommand{\delo}{\partial \Omega}
\renewcommand{\div}{\mathrm{div}}
\newcommand{\dom}{\partial\Omega}
\newcommand{\N}{{\mathbb N}}
\newcommand{\R}{{\mathbb R}}
\newcommand{\Z}{{\mathbb Z}}
\newcommand{\diff}{\ensuremath{\:\text{d}}}
\DeclareMathOperator*{\dist}{dist}
\newcommand{\HS}{\ensuremath{\text{\it HS}^1}}
\newcommand{\divt}{\ensuremath{\text{div}}}
\newtheorem{theorem}{Theorem}[section]
\newtheorem{lemma}[theorem]{Lemma}
\newtheorem{corollary}[theorem]{Corollary}
\newtheorem{definition}[theorem]{Definition}
\numberwithin{equation}{section}
\theoremstyle{definition}
\begin{document}

\title{Boundary value problems for second order elliptic operators satisfying a Carleson condition}

\begin{abstract}
Let $\Omega$ be a Lipschitz domain in $\mathbb R^n$ $n\geq 2,$ and
$L=\divt A\nabla\cdot$ be a second order elliptic operator in
divergence form. We establish solvability of the Dirichlet
regularity problem with boundary data in $H^{1,p}(\dom)$ and of
the Neumann problem with $L^p(\partial\Omega)$ data for the
operator $L$ on Lipschitz domains with small Lipschitz constant.
We allow the coefficients of the operator $L$ to be rough obeying
a certain Carleson condition with small norm. These results
complete the results of \cite{DPP} where the $L^p(\dom)$ Dirichlet
problem was considered under the same assumptions and \cite{DR}
where the regularity and Neumann problems were considered on two
dimensional domains.
\end{abstract}

\author{Martin Dindo\v{s}, Jill Pipher \and David Rule}

\maketitle \markright{BOUNDARY VALUE PROBLEMS FOR OPERATORS
SATISFYING CARLESON CONDITION}

\section{Introduction}

\bigskip

This paper continues the study, begun in \cite{DPP}, of boundary
value problems for second order divergence form elliptic operators,
when the coefficients satisfy a certain natural, minimal smoothness
condition. Specifically, we consider operators $L= \mbox{div}(A
\nabla)$ such that $A(X)=(a_{ij}(X))$ is uniformly elliptic in the
sense that there exists a positive constant $\Lambda$ such that
$$
\Lambda|\xi|^2 < \sum_{i,j} a_{ij}(X)\xi_i\xi_j <
\Lambda^{-1}|\xi|^2,
$$
for all $X$ and all $\vec{\xi}\in \R^n$. 

We do not assume symmetry
of the matrix $A$: the non-symmetric situation requires a different approach from that of the symmetric situation, and moreover, the 
sharp results are very different in this setting as well.  There are a variety of reasons for studying 
non-symmetric operators. These include the connections with
non-divergence form equations, and the broader issue of obtaining
estimates on elliptic measure in the absence of special $L^2$
identities of Rellich type which relate tangential and normal derivatives. Boundary value problems for divergence form
equations under minimal regularity assumptions on the coefficients have been studied for several decades. The study of non-symmetric operators has
been treated fairly recently, in spite of the aforementioned connections and their
relevance to the theory of homogenization (\cite{BLP}).

We do, however, assume some regularity on the coefficients,
in terms of 
$$\mbox{osc}_{B(X,\delta(X)/2)}a_{ij} =   \sup_{X_1,X_2 \in B(X,\delta(X)/2)} |a_{i,j}(X_1) - a_{i,j}(X_2)|,$$
where $\delta(X)$ denotes the distance of $X$ to the boundary.

The main result of this paper is that under the assumption that
\begin{equation}\label{carlI}
d\mu =   \delta(X)^{-1}\left(\mbox{osc}_{B(X,\delta(X)/2)}a_{ij}\right)^2 dX
\end{equation}
is the density of Carleson measure with small Carleson norm (see Definition \ref{cmeasure}), and under certain conditions on $\partial\Omega$, 
the Dirichlet regularity problem for the operator $L$ with boundary data in
$H^{1,p}(\partial\Omega)$ is solvable for the full range $1<p<\infty$. In addition, the Neumann boundary value problem with $L^p(\partial\Omega)$
data is solvable in the same range of $p$.\vglue2mm

We now set the context for these results.
In \cite{KKPT}, the study of nonsymmetric divergence form
operators with bounded measurable coefficients was initiated.
The objective was
to develop new methods to prove mutual absolute continuity of elliptic measure and surface measure, and to apply
these to the study of the Dirichlet problem with data in $L^p$. The non-symmetric operators studied in \cite{KKPT} had coefficients independent of the 
transverse variable, an instance of a problem that has close connections to the Kato square root problem for complex coefficient operators.
Despite the fact that the new methods introduced were applicable in arbitrary dimensions, the application to these divergence form matrices was limited to
two dimensions, and was not completely resolved until much later  (\cite{HKMP}) .

In
\cite{KP}, the methods of \cite{KKPT} were applied to another class of divergence form operators, namely those satisfying the related gradient condition: $d\mu = \delta(X) |\nabla a_{i,j}(X)|^2 dX$
is a Carleson measure.  This regularity condition on the coefficients of the matrix is quite natural. 
In particular, it arises from the pull-back of the Laplacian under a change of variable considered by Dahlberg, Kenig-Stein, and Ne\v{c}as
(\cite{D}, \cite{N}) that 
produces an ``adapted" distance function: a distance function to the boundary of a Lipschitz graph possessing some regularity.
The main result of \cite{KP} is that, for an operator in this class, the elliptic measure and surface Lebesgue measure are mutually absolutely continuous and
in fact there exists solvability of the Dirichlet problem with boundary data in {\it some} $L^p$ space for $p$ sufficiently large.

The sharp range of solvability ($1<p<\infty$) of the $L^p$ Dirichlet boundary value problem was solved in \cite{DPP} for the class of operators under consideration here.
This ``small" Carleson condition on coefficients also arises naturally. 
For example, take
any smooth
elliptic operator in the region above a graph $t = \varphi(x)$.
If the function $\varphi$ is $C^1$, it is a classical result of  \cite{FJR} that the
Dirichlet, regularity and Neumann boundary value problems are
solvable with data in $L^p$ for $1 < p < \infty$, by the method of
layer potentials. 
If the function $\varphi$ satisfies the weaker condition, $\nabla \varphi \in L^{\infty} \bigcap$ {\it
VMO}, then solvability of the Dirichlet problem  in $L^p$ for $1 < p < \infty$ is a corollary of the main theorem of \cite{DPP}.
By changing variables in a solution $u$, via the mapping $\Phi:{\mathbb
R}^n_+\to\{X=(x,t);t>\phi(x)\}$ defined by (\ref{eDKS}), the function
$v=u\circ\Phi$ will solve an
elliptic equation in $\R^n_+$ whose coefficients satisfy
(\ref{carlI}). Thus
our main theorem (together with \cite{DPP}) has the corollary that 
the Dirichlet, regularity, and Neumann problems for smooth operators are solvable, in
the same range of $p$ as in \cite{FJR}, when the boundary of the domain is defined by $t
= \varphi(x)$ where $\nabla \varphi \in L^{\infty} \bigcap$ {\it
VMO}. The exact statements of the results are formulated in Section 2.
\vglue2mm

The regularity and Neumann
boundary value problems for operators satisfying (\ref{carlI}), are considerably more difficult than the Dirichlet problem.
Because the solvability of the Neumann problem is connected to estimates on singular integral operators (as opposed to maximal functions in the Dirichlet problem),  the estimates required when specifying Neumann conditions are more delicate in general. 
For the operators we consider here, progress was first made in two dimensions in
the work of \cite{DR}. The proof relied on a particular notion of
conjugate solution and did not
generalize to higher dimensional domains. (We note, however, that such generalizations have been carried out in other contexts: see Theorem 9.3 of \cite{AA} and Section 1 of \cite{R}.)

Indeed, Theorem \eqref{NPLPneu} is one of very few results for solvability of a Neumann problem in all dimensions for non-symmetric divergence form operators with
minimal regularity.  Presently, there are no known results for solvability of the Neumann problem when the coefficients satisfy a Carleson condition without the smallness assumption on the Carleson norm. One would expect to find solvability for $p$ near $1$ in this case, but the methods of this paper appear to be applicable only when determining solvability for the full range of $p$.

The strategy of our proofs is as follows. We first establish the solvability of the regularity problem for $p=2$.
In \cite{DK} and \cite{DKP} a better understanding of the Dirichlet and regularity boundary
values problems was obtained, including a \lq\lq duality" between
the solvability of the Dirichlet boundary value problem and the
regularity problem for the adjoint operator. In particular, we will be able to infer solvability for all values of $p$ once $p=2$ is established. Hence we are able to avoid the use of the $p$-adapted square
function introduced in \cite{DPP} and which was essential in
establishing solvability of the $L^p$ Dirichlet problem.

The proof of the $p=2$ case of the regularity problem starts with a localization and change of variables: thus, most estimates can be reduced to
their local versions on a neighborhood of $0$ in ${\mathbb R}^{n}_+$. In particular this is how we establish the key estimate (\ref{e1}) of Lemma \ref{l1} (section 3).
Essentially, this estimate and the estimate from Lemma \ref{l2} can be encapsulated as
\begin{eqnarray}
\int_{\partial\Omega}S^2(\nabla u)\,d\sigma \lesssim \int_{\dom}
|\nabla_T u|^2\,d\sigma+\|\mu\|_{Carl}\int_{\dom}N^2(\nabla u)
d\sigma.
\end{eqnarray}
where $S$ and $N$ are the square function and the non-tangential maximal function, respectively. By $\|\mu\|_{Carl}$ we denote the Carleson norm of the coefficients (a variant of (\ref{carlI})). Hence when we show that $S$ and $N$ applied to the gradient of a solution have comparable $L^2$ norms (section 4), we will have, when $\|\mu\|_{Carl}$ is sufficiently small,
\begin{eqnarray}
\int_{\dom}N^2(\nabla u)
d\sigma\approx \int_{\partial\Omega}S^2(\nabla u)\,d\sigma \lesssim \int_{\dom}
|\nabla_T u|^2\,d\sigma\label{R2}
\end{eqnarray}
which is the desired solvability of the regularity problem when $p=2$ (section 5).

The solvability of the Neumann problem is much trickier as there is no
appropriate analogue of the duality results in \cite{DK}. We overcome this
by using the solvability of the regularity problem, which we
established here first, in particular our starting estimate is the $L^p$ version of the estimate (\ref{R2}). Then we proceed by induction for integer values of $p\ge 2$ where, at each step, a rather involved series of integration by parts allows us to introduce one extra co-normal derivative on the right-hand side.  Lemma \ref{NPl2} in section 6 contains this key step. The solvability for non-integer values of $p$ follows by the extrapolation results in \cite{KP3} and \cite{KP2} (section 7).

\subsection*{Acknowledgements} J.~Pipher was partially supported by NSF DMS grant 0901139.
M. Dindo\v{s} was partially supported by EPSRC EP/J017450/1 grant.
D.~Rule gratefully acknowledges the support of CANPDE. Part of
this research was carried out during a visit to Brown University,
M.~Dindo\v{s} and D.~Rule wish to thank the University for its
hospitality.

\section{Definitions and Statements of Main Theorems}

Let us begin by introducing Carleson measures and the square function
on domains which are locally given by the graph of a function. We
shall assume that our domains are Lipschitz.

\begin{definition}
$\Z \subset \R^n$ is an $\ell$-cylinder of diameter $d$ if there
exists a coordinate system $(x,t)$ such that
\[
\Z = \{ (x,t)\; : \; |x|\leq d, \; -2\ell d \leq t \leq 2\ell d \}
\]
and for $s>0$,
\[
s\Z:=\{(x,t)\;:\; |x|<sd, -2\ell d \leq t \leq 2\ell d \}.
\]
\end{definition}

\begin{definition}\label{DefLipDomain}
$\Omega\subset \R^n$ is a Lipschitz domain with Lipschitz
`character' $(\ell,N,C_0)$ if there exists a positive scale $r_0$ and
at most $N$ $\ell$-cylinders $\{{\Z}_j\}_{j=1}^N$ of diameter $d$, with
$\frac{r_0}{C_0}\leq d \leq C_0 r_0$ such that\vglue2mm

\noindent (i) $8{\Z}_j \cap \delo$ is the graph of a Lipschitz
function $\phi_j$, $\|\nabla\phi_j \|_\infty \leq \ell \, ;
\phi_j(0)=0$,\vglue2mm

\noindent (ii) $\displaystyle \delo=\bigcup_j ({\Z}_j \cap \delo
)$,

\noindent (iii) $\displaystyle{\Z}_j \cap \Omega \supset \left\{
(x,t)\in\Omega \; : \; |x|<d, \; \mathrm{dist}\left( (x,t),\delo
\right) \leq \frac{d}{2}\right\}$.
\end{definition}

\begin{definition} Let $\Omega$ be a Lipschitz domain.
For $Q\in\partial\Omega$, $X\in \Omega$ and $r>0$ we write:
\begin{align*}
\Delta_r(Q) &= \partial \Omega\cap B_r(Q),\,\,\,\qquad T(\Delta_r) = \Omega\cap B_R(Q),\\
\delta(X) &=\text{dist}(X,\partial \Omega).
\end{align*}
\end{definition}

\begin{definition}\label{cmeasure}
Let $T(\Delta_r)$ be the Carleson region associated to a surface
ball $\Delta_r$ in $\delo$, as defined above. A measure $\mu$ in $\Omega$ is
Carleson if there exists a constant $C=C(r_0)$ such that for all
$r\le r_0$,
\[\mu(T(\Delta_r))\le C \sigma (\Delta_r).\]
The best possible $C$ is the Carleson norm and will denoted by $\|\mu\|_{Carl}$. When we want to
emphasize the dependence of $C$ on $r_0$ we shall write $\|\mu\|_{Carl,r_0}$.
When $\mu$ is Carleson we use the notation $\mu \in \mathcal C$.\vglue1mm \noindent If
$\displaystyle\lim_{r_0\to 0} \|\mu\|_{Carl,r_0}=0$, then we say that the
measure $\mu$ satisfies the vanishing Carleson condition, and we
denote this by writing $\mu \in \mathcal C_V$.
\end{definition}

\begin{definition}\label{dgamma}
A cone of aperture $a$ is a non-tangential approach region for $Q
\in \partial \Omega$ of the form
\[\Gamma_{a}(Q)=\{X\in \Omega: |X-Q|\le (1+a)\;\; dist(X,\partial \Omega)\}.\]
Sometimes it will be necessary to truncate $\Gamma_a(Q)$, so we define $\Gamma_{a,h}(Q)= \Gamma_a(Q) \cap B_h(Q)$.
\end{definition}

%When $p=2$, the square function appearing below is the classical
%square function for a Lipschitz domain. The $p$-adapted square
%function was introduced in \cite{DPP}.
%
\begin{definition}
If $\Omega \subset \R^n$, the square function of a function $u$ defined on $\Omega$, relative to the family of cones $\{\Gamma_a(Q)\}_{Q \in \partial\Omega}$, is
\[S_{[a]}(u)(Q) =\left( \int_{\Gamma_a(Q)} |\nabla u (X)|^2(X)
dist(X,\partial \Omega)^{2-n} dX \right)^{1/2}\] at each $Q \in \partial
\Omega$. The
non-tangential maximal function relative to $\{\Gamma_a(Q)\}_{Q \in \partial\Omega}$ is
$$N_{[a]}(u)(Q) = \sup_{X\in \Gamma_a(Q)} |u(X)|$$
at each $Q \in \partial\Omega$
The truncation at height $h$ of the non-tangential maximal function is defined by $N_{[a],h}(u)(Q)=\sup_{X\in\Gamma_a(Q)\cap B_h(Q)} |u(X)|$,
with a similar notation $S_{[a],h}$ for truncated square function.\\
It will often be convenient to supress one or both of the parameters $a$ and $h$ in the square and non-tangential functions when their values do not play a significant role in an argument. So we may write $S$, $S_{[a]}$ or $S_h$ to denote $S_{[a],h}$ when no confusion should arise. Similarly we may abreviate $N_{[a],h}$ as $N$, $N_{[a]}$ or $N_h$.\\
We also define the following variant of the non-tangential
maximal function:
\begin{equation}\label{NTMaxVar} \widetilde{N}(u)(Q) = \widetilde{N}_{[a],h}(u)(Q)
=\sup_{X\in\Gamma_{a,h}(Q)}\left(\fint_{B_{{\delta(X)}/{2}}(X)}|u(Y)|^2\diff
Y\right)^{\frac{1}{2}}.
\end{equation}
\end{definition}

\begin{definition} Let $1<p\le\infty$.
The Dirichlet problem with data in $L^p(\partial \Omega, d\sigma)$
is solvable (abbreviated $(D)_{p}$) if for every $f\in C(\partial
\Omega)$ the weak solution $u$ to the problem $Lu=0$ with
continuous boundary data $f$ satisfies the estimate
\[\|N(u)\|_{L^p(\partial \Omega, d\sigma)} \lesssim \|f\|_{L^p(\partial \Omega, d\sigma)}.\]
The implied constant depends only the operator $L$, $p$, and the
Lipschitz character of the domain as measured by the triple
$(\ell,N,C_0)$ of Definition \ref{DefLipDomain}.
\end{definition}

\begin{definition}\label{DefRpcondition}
Let $1<p<\infty$. The regularity problem with boundary data in
$H^{1,p}(\partial\Omega)$ is solvable (abbreviated $(R)_{p}$), if
for every $f\in H^{1,p}(\partial\Omega)\cap C(\partial \Omega)$
the weak solution $u$ to the problem
\begin{align*}
\begin{cases}
Lu &=0 \quad\text{ in } \Omega\\
u|_{\partial B} &= f \quad\text{ on } \partial \Omega
\end{cases}
\end{align*}
satisfies
\begin{align}
\nonumber \quad\|\widetilde{N}(\nabla u)\|_{L^p(\partial
\Omega)}\lesssim \|\nabla_T f\|_{L^{p}(\partial\Omega)}.
\end{align}
Again, the implied constant depends only the operator $L$, $p$,
and the Lipschitz character of the domain.
\end{definition}

\begin{definition}\label{NPDefNpcondition}
Let $1<p<\infty$. The Neumann problem with boundary data in
$L^p(\partial\Omega)$ is solvable (abbreviated $(N)_{p}$), if for
every $f\in L^p(\partial\Omega)\cap C(\partial \Omega)$ such that
$\int_{\dom} fd\sigma=0$ the weak solution $u$ to the problem
\begin{align*}
\begin{cases}
Lu &=0 \quad\text{ in } \Omega\\
A\nabla u\cdot \nu &= f \quad\text{ on }
\partial \Omega
\end{cases}
\end{align*}
satisfies
\begin{align}
\nonumber \quad\|\widetilde{N}(\nabla u)\|_{L^p(\partial
\Omega)}\lesssim \|f\|_{L^{p}(\partial\Omega)}.
\end{align}
Again, the implied constant depends only the operator $L$, $p$,
and the Lipschitz character of the domain. Here $\nu$ is the outer
normal to the boundary $\dom$.
\end{definition}

We are now ready to formulate our main results.

\begin{theorem}\label{LPreg} Let $1<p<\infty$ and let $\Omega\subset {\mathbb R}^n$, $n\ge 2$ be a
bounded Lipschitz domain with Lipschitz constant $\ell$ and $Lu=\mbox{div}(A\nabla u)$ be a uniformly elliptic
differential operator defined on $\Omega$ with ellipticity
constant $\Lambda$ and coefficients which are such that
\begin{equation}\label{carlM}
d\mu=\delta(X)^{-1}\left(\mbox{osc}_{B(X,\delta(X)/2)}a_{ij}\right)^2\,dX
\end{equation}
is the density of a Carleson measure with norm $\|\mu\|_{Carl,r_0}$ on Carleson regions of size at most $r_0$. Then there exists
$\varepsilon=\varepsilon(\Lambda,n,p)>0$ such that if
$\max\{\ell,\|\mu\|_{Carl,r_0}\}<\varepsilon$ then the $(R)_p$ regularity problem
\begin{align*}
\begin{cases}
Lu &=0 \quad\text{ in } \Omega\\
u|_{\partial \Omega} &= f \quad\text{ on } \partial \Omega\\
\widetilde{N}(\nabla u) &\in  L^p(\partial\Omega)
\end{cases}
\end{align*}
is solvable for all $f$ with $\|\nabla_T
f\|_{L^p(\partial\Omega)}<\infty$. Moreover, there exists a
constant $C=C(\Lambda,n,p)>0$ such that
\begin{equation}
\|\widetilde{N}(\nabla u)\|_{L^p(\partial\Omega)}\le C\|\nabla_T
f\|_{L^p(\partial\Omega)}.
\end{equation}

\noindent In particular, if the domain $\Omega$ is $C^1$ and
$A=(a_{ij})$ satisfies the vanishing Carleson condition, then the
regularity problem is solvable for all $1<p<\infty$. More
generally, if the boundary of the domain $\Omega$ is given locally by a function $\phi$ such that $\nabla
\phi$ belongs to $L^{\infty} \cap \mbox{VMO}$, then, once again, the
regularity problem is solvable for all $1<p<\infty$.
\end{theorem}

For the Neumann problem we have an analogous result.

\begin{theorem}\label{NPLPneu} Let $L$, $\Omega$, $p$, $n$, $\Lambda$, $r_0$, $\mu$ have the same meaning and satisfy the same assumptions
as in Theorem \ref{LPreg}.

Then there exists
$\varepsilon=\varepsilon(\Lambda,n,p)>0$ such that if
$\max\{\ell,\|\mu\|_{Carl,r_0}\}<\varepsilon$ then the $(N)_p$ Neumann problem
\begin{align*}
\begin{cases}
Lu &=0 \quad\text{ in } \Omega\\
A\nabla u \cdot \nu &= f \quad\text{ on } \partial \Omega\\
\widetilde{N}(\nabla u) &\in  L^p(\partial\Omega)
\end{cases}
\end{align*}
is solvable for all $f$ in $L^p(\partial\Omega)$  such that
$\int_{\dom} fd\sigma=0$. Moreover, there exists a constant
$C=C(\Lambda,n,p)>0$ such that
\begin{equation}
\|\widetilde{N}(\nabla u)\|_{L^p(\partial\Omega)}\le C\|
f\|_{L^p(\partial\Omega)}.
\end{equation}
\end{theorem}

We first discuss the proof of Theorem \ref{LPreg}.

\begin{proof}  It will follow from Theorem \ref{L2reg2} that the $(R)_2$
regularity problem is solvable for operators satisfying
(\ref{carlM}), provided $\varepsilon$ is sufficiently small. To complete the proof we will use \cite[Theorem 1.1]{DK}. (There is also an older
result by \cite{S} for symmetric operators which should be
adaptable to the non-symmetric case.)

According to \cite[Theorem 1.1]{DK}, $(R)_2$ solvability implies
the solvability of the Regularity problem in an end-point Hardy-Sobolev
space, a boundary-value problem corresponding to $p=1$. 
One can then use  \cite[Theorem 1.1]{DK}  to conclude that for $p\in (1,\infty)$:
\begin{equation}
(R)_p \mbox{ is solvable if and only if } (D^*)_{p'}\mbox{ is
solvable for }p'=p/(p-1).\label{RPDP}
\end{equation}
Here $(D^*)_{p'}$ denotes the $L^{p'}$ Dirichlet problem for the
adjoint operator $L^*u=\div(A^t\nabla u)$.\vglue2mm

However by \cite[Corollary 2.3]{DPP} the $L^{p'}$ Dirichlet
problem for the operator $L^*$ is solvable under the assumptions
of Theorem \ref{LPreg} (for sufficiently small
$\varepsilon=\varepsilon(p')>0$). Hence by (\ref{RPDP}) the
$(R)_p$ problem for the operator $L$ is solvable proving our
claim.
\end{proof}

As follows from the proof given above we also have a result for
the endpoint $p=1$.

\begin{corollary} Under the same assumptions as in
Theorem \ref{LPreg} the $(R)_{HS^1}$ regularity problem for the
operator $L$ is solvable for all $f$ with $\nabla_T f$ in the
atomic Hardy space (c.f.~\cite[Theorem 2.3]{DK}). Moreover, there
exists a constant $C=C(\Lambda,n,a)>0$ such that
\begin{equation}
\|\widetilde{N}(\nabla u)\|_{L^1(\partial\Omega)}\le C\|\nabla_T
f\|_{{\hbar^1(\partial\Omega)}}.
\end{equation}
\end{corollary}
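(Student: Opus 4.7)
The plan is to read this corollary off immediately from the argument already used for Theorem \ref{LPreg}, simply stopping one step earlier. I would first invoke Theorem \ref{L2reg2}, which under the smallness hypothesis $\max\{L, C(r_0)\} < \varepsilon$ delivers the solvability of $(R)_2$ for $L$. Given $(R)_2$, the endpoint Hardy--Sobolev problem $(R)_{HS^1}$ is then produced directly by the extrapolation result \cite[Theorem 1.1]{DK} (equivalently \cite[Theorem 2.3]{DK}), which asserts, for any elliptic divergence-form operator with bounded measurable coefficients on a Lipschitz domain, that $(R)_2$ solvability implies $(R)_{HS^1}$ solvability, together with the estimate
\[
\|\widetilde{N}(\nabla u)\|_{L^1(\partial\Omega)} \le C\,\|\nabla_T f\|_{\hbar^1(\partial\Omega)}.
\]
This is precisely the claimed inequality.

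The constant $C$ provided by \cite{DK} depends only on the ellipticity constant $\Lambda$, the dimension $n$, the Lipschitz character $(L, N, C_0)$ of $\partial\Omega$, the aperture $a$ of the cones defining $\widetilde{N}$, and the constant in the $(R)_2$ bound obtained from Theorem \ref{L2reg2}. Under our hypotheses each of these is controlled by $\Lambda$, $n$, and $a$, which matches the dependence $C = C(\Lambda, n, a)$ stated in the corollary.

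The only point to verify is that the hypotheses of \cite[Theorem 1.1]{DK} are actually satisfied in our setting: that theorem does not demand any Carleson-type structure on the coefficients, only $L^\infty$ ellipticity on a Lipschitz domain plus the $(R)_2$ input, all of which we have. Consequently there is no genuine obstacle here; the Carleson smallness of \eqref{carlM} is used only to produce $(R)_2$ via Theorem \ref{L2reg2}, and the passage to the atomic Hardy-space endpoint is then entirely general. In short, the corollary is the $p \to 1^+$ endpoint of the extrapolation scheme already exploited in Theorem \ref{LPreg}, and its proof is just that scheme truncated before the duality with $(D^*)_{p'}$ is invoked.
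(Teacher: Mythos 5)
Your proposal matches the paper's own argument: the corollary is stated there as an immediate consequence of the first half of the proof of Theorem \ref{LPreg}, namely obtaining $(R)_2$ from Theorem \ref{L2reg2} and then applying the extrapolation result of \cite[Theorem 1.1]{DK} to reach the Hardy--Sobolev endpoint $(R)_{HS^1}$, stopping before the duality with $(D^*)_{p'}$ is used. Your additional remarks on the constant's dependence and on the hypotheses of \cite{DK} are consistent with what the paper relies on.
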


For the Neumann problem a result analogous to the duality (\ref{RPDP}) is unknown and will require a more complicated approach.
We return now to the proof of Theorem \ref{NPLPneu}.

\begin{proof} As follows from Theorem \ref{NPL2neu} the $(N)_p$
Neumann problem is solvable for operators satisfying
(\ref{carl}), provided $\varepsilon$ is sufficiently small and
$p$ is an integer. To replace the condition (\ref{carl}) by
(\ref{carlM}) we use the same idea as \cite[Corollary 2.3]{DPP}
and Theorem \ref{L2reg2}. For a matrix $A$ satisfying
(\ref{carl}) with ellipticity constant $\Lambda$ one can find
(by mollifying the coefficients of $A$) a new matrix $\widetilde{A}$
with same ellipticity constant $\Lambda$ such that $\widetilde{A}$
satisfies (\ref{carlM}) and
\begin{equation}
\sup\{\delta(X)^{-1}|(A-\widetilde{A})(Y)|^2;\, Y\in
B(X,\delta(X)/2)\}\label{NPePert}
\end{equation}
is a Carleson norm. Moreover, if the Carleson norm for matrix $A$
is small (on balls of radius $\le r_0$), so are the Carleson norms
of (\ref{carlM}) for $\widetilde{A}$ and (\ref{NPePert}). Hence
by Theorem \ref{NPL2neu} the $(N)_p$ regularity problem is
solvable for the operator
$\widetilde{L}u=\mbox{div}(\widetilde{A}\nabla u)$.

The solvability of the Neummann problem for perturbed operators
satisfying (\ref{NPePert}) has been studied in \cite{KP2}. It
follows by \cite[Theorem 2.2]{KP2} that the $L^p$ Neumann problem
for the operator $L$ is solvable, provided (\ref{NPePert}) has
small Carleson norm and the regularity $(R)_p$ and Neumann $(N)_p$
problems are solvable for $\widetilde{L}$. Actually, the results
in \cite{KP2} are stated for symmetric operators, however careful
study of the proof of \cite[Theorem 2.2]{KP2} reveals that
symmetry is not necessary.

However by Theorem \ref{LPreg} the $(R)_p$ regularity problem for
$\widetilde{L}$ is solvable provided the Carleson norm of
(\ref{carl}) is sufficiently small and $(N)_p$ Neumann problem
for $\widetilde{L}$ is solvable by Theorem \ref{NPL2neu}. Hence we
have solvability of the Neumann problem $(N)_p$ for $L$ by
\cite[Theorem 2.2]{KP2}.

If $p>1$ is not an integer we use \cite[Theorem 6.2]{KP3}. (This result is also stated for symmetric operators, however, once again, symmetry is not necessary.) This
theorem implies that $(N)_p$ is solvable, provided $(R)_k$ and
$(N)_k$ are solvable, where $k$ is any integer larger than
$p$.
\end{proof}

%%%%%%%%%%%%%%%%%%%%%%%%%%%%%%%%%%%%%%%%
\section{The Square Function for the Gradient of a Solution}

In this section we shall assume that $\Omega\subset{\mathbb R}^n$ is a smooth bounded domain.
As we shall see, the
case of a Lipschitz domain with small coefficients can be reduced
to this situation via a pull back map of Dahlberg-Kenig-Stein and Ne\v{c}as. (see (\ref{eDKS})).

Because $\Omega$ is bounded, we can think of $\Omega$ as being embedded into a large torus
${\mathbb T}^n$. We aim to establish local results near
$\partial\Omega$. For this reason we introduce a convenient
localization and parametrization of points near $\dom$.

We want to write any point $X\in\Omega$ near $\dom$ as $X=(x,t)$
where $x\in\dom$ and $t>0$. The boundary $\dom$ itself then will
be the set $\{(y,0);y\in\dom\}$. One way to get such a
parametrization is to consider the inner normal $N$ to the
boundary $\dom$. The assumption that $\dom$ is smooth implies
smoothness of $N$. On $\Omega$ we have a smooth underlying metric
of the flat torus ${\mathbb T}^n$.

We consider the geodesic flow ${\mathcal F}_t$ in this metric
starting at any point $x\in\dom$ in the direction $N(x)$. We
assign to a point $X\in\Omega$ coordinates $(x,t)$ if $X={\mathcal
F}_tx$. This means that starting at $x\in\dom$ it takes time $t$
for the flow to get to the point $X$. It's an easy exercise that
the map $(x,t)\mapsto X={\mathcal F}_tx$ is a smooth
diffeomorphism for small $t\le t_0$. Using this parametrization we
consider the set $\Omega_{t_0}=\{(x,t);(x,0)\in\dom\mbox{ and
}0<t<t_0\}$.

Let us now deal with the issue of the metric. We want to work with
the simplest possible metric on $\Omega$ available. Since we only
work on $\Omega_{t_0}$ we take our metric tensor there to be a
product $d\sigma\otimes dt$ where $d\sigma$ is the original metric
tensor on $\Omega$ restricted to $\dom$. The product metric
$d\sigma\otimes dt$ is different from the original metric on
$\Omega$, but they are both smooth and comparable, that is the
distances between points are comparable. Now we express the
operator $L$ in this new product metric.\vglue2mm

We note that under this what is effectively a change of variables, the new coefficients of our
operator are going to satisfy the same Carleson condition as the
original coefficients with Carleson norm comparable to the
original. We observe in particular that the Carleson condition
implies that $\nabla A\in L^\infty_{loc}( \Omega_{t_0})$ hence any
solution of $Lu=0$ on $\Omega_{t_0}$ has a well defined pointwise
gradient $\nabla u$. Furthermore, in the product metric
$d\sigma\otimes dt$, the gradient $\nabla u$ can be written as
$$\nabla u =(\nabla_T u, \partial_t u),$$
where $\nabla_T$ is the gradient restricted to the $n-1$ dimensional
set $\dom\times\{t=\text{const}\}$.\vglue2mm

Frequently throughout the paper it will be useful to localize to a single coordinate patch. The following definition gives a precise notion of coordinate frame.
\begin{definition}\label{DFrame} Let $\dom$ be a smooth $n-1$ dimensional
compact Riemannian manifold. We say that a finite collection of
smooth vector fields $(\vec{T_\tau})_{\tau=1}^m$ ($m\ge n-1) \in T^*(\partial\Omega)$ is a
coordinate frame for $\dom$ if:
\begin{itemize}
\item there is a finite collection of open sets $U_1,U_2,\dots,
U_k$ in $\R^{n-1}$ and smooth diffeomorphisms
$\varphi_s:U_s\to\dom$ such that
$\bigcup_s\varphi_s(\widetilde{U_s})$ covers $\dom$, where
$\widetilde{U_s}$ is an open subset of $U_s$ such that
$\overline{\widetilde{U_s}}\subset U_s$;

\item for each $1\le s \le k$ there exist a set $A_s\subset
\{1,2,\dots,m\}$ such that $|A_s|=n-1$ and
$$\{\varphi_s^*(\vec{T_\tau})\big|_{\widetilde{U_s}};\,\tau\in A_s\}=\{\textstyle\frac{\partial}{\partial x_j}\big|_{\widetilde{U_s}};\,j=1,2,\dots,n-1\}.$$
That is the pullback of the vectors $\vec{T_\tau}$ to $U_s$, $\tau\in
A_s$ restricted to $\widetilde{U_s}$ are just coordinate vector
fields on $\widetilde{U_s}$.
\end{itemize}
\end{definition}

Clearly, $\dom$ has at least one such coordinate frame. Indeed,
the existence of a finite collection
$(U_s,\widetilde{U_s},\varphi_s)$ satisfying all assumptions of
the previous definition follows from the fact that $\dom$ is a
smooth compact Riemannian manifold. Then on each $U_s$ we consider
vector fields $\psi_s\frac{\partial}{\partial_j}$,
$j=1,2,\dots,n-1$ where $\psi_s\in C_0^\infty(U_s)$ is a smooth
cutoff function such that $\psi_s \big|_{\widetilde{U_s}}=1$ and
$0\le \psi_s \le 1$ on $U_s$. Then
$$\{{\varphi_s}_*(\psi_s\textstyle\frac{\partial}{\partial_j}); 1\le s \le k,\, 1\le j\le n-1\}$$
is one such coordinate frame. Here ${\varphi_s}_*$ denotes the
push-forward of a vector field from $U_s$ onto $\dom$.\vglue2mm

We start with the following key lemma for the square function
$S(\nabla_T u)$.

\begin{lemma}\label{l1} Let $u$ be a solution of $Lu=0$, where $L=\div(A\nabla u)$ is a uniformly elliptic differential operator
defined on $\Omega_{t_0}$ with bounded coefficients such
that
\begin{equation}\label{carl}
d\mu=\sup \{ \delta(X)|\nabla a_{ij}(Y)|^2 \;:\;Y\in B_{\delta(X)/2}(X)
\}\,dX
\end{equation}
is the density of a Carleson measure on all Carleson boxes of size
at most $r_0$ with norm $\|\mu\|_{Carl,r_0}$.

Then there exists $r_1>0$ and $K>0$ depending only on the geometry
of the domain $\Omega$, elliptic constant $\Lambda$ and dimension
$n$ such that
\begin{eqnarray}
\label{e1} &&\int_{\partial\Omega}S^2_{r/2}(\nabla_Tu)\,d\sigma
\simeq \iint_{\dom\times (0,r/2)}|\nabla(\nabla_T u(X))|^2
\delta(X)\,dX
\\\nonumber &\le &K\left[\int_{\dom}
|\nabla_T u|^2\,d\sigma+\|\mu\|_{Carl,r_0}\int_{\dom}N_r^2(\nabla u)
d\sigma+\frac{1}{r}\|\nabla u\|^2_{L^2(\Omega)}\right],
\end{eqnarray}
for all $r\le \min\{r_0,r_1,t_0\}$. Here $N_r$ denotes is the non-tangential maximal
function truncated at height $r$, $\delta(X)=t$ for a point
$X=(x,t)\in \dom\times (0,t_0)$, $\nabla_T u$ is the tangential
gradient of $u$ on $\partial\Omega\times\{t\}$ and $dX$ is the
product measure $d\sigma dt$.
\end{lemma}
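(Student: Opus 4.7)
The first equivalence in (\ref{e1}) is a standard Fubini/Whitney argument: for $X=(x,t)\in\dom\times(0,r/2)$ the $\sigma$-measure of $\{Q\in\dom:X\in\Gamma_a(Q)\}$ is comparable to $\delta(X)^{n-1}$, which cancels the $\delta^{2-n}$ in the definition of $S$ and leaves the claimed factor of $\delta(X)$. Everything below is devoted to the inequality on the right of (\ref{e1}).

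To prove that inequality, I would localize via the coordinate frame of Definition \ref{DFrame} and reduce matters to controlling $\iint|\nabla v|^2\,t\,dX$ for $v=Tu$, where $T$ is a single tangential field from the frame. The key observation is that differentiating $Lu=0$ along $T$ produces the divergence-form commutator equation
\[
\divt\bigl(A\nabla v+(TA)\nabla u\bigr)=0,
\]
with $TA=(Ta_{ij})$. I would test this against $v\zeta$, with $\zeta(t)=t\chi(t/r)$ for a standard smooth cutoff $\chi$ equal to $1$ on $[0,1/4]$ and supported in $[0,1/2]$, and integrate by parts twice over $\Omega_{t_0}$. Using ellipticity, the left-hand side becomes a lower bound for $\iint|\nabla v|^2\,t\,dX$ modulo the cutoff, and the right-hand side splits into a commutator term $\iint(TA)\nabla u\cdot\nabla v\,\zeta\,dX$, a boundary term at $t=0$ produced by $\partial_t\zeta$, and cutoff terms supported on $\{r/4<t<r/2\}$.

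The commutator term is absorbed by Cauchy--Schwarz against an $\epsilon$-multiple of $\iint|\nabla v|^2 t\,dX$, leaving $\iint|TA|^2|\nabla u|^2\,t\,dX$; by the hypothesis (\ref{carl}) this integrand is bounded pointwise by a Carleson density of small norm, and the $L^2$ Carleson embedding yields the bound $\varepsilon\int N_r^2(\nabla u)\,d\sigma$ with $\varepsilon=\varepsilon(C(r_0))\to 0$ as $C(r_0)\to 0$. The boundary term at $t=0$, after summing over the coordinate frame and using the pointwise ellipticity of $a_{nn}$, yields a contribution comparable to $\int_{\dom}|\nabla_T u|^2\,d\sigma$, the first term on the right of (\ref{e1}). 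The cutoff terms, where $|\nabla\zeta|$ is bounded by $1$ on the annulus $\{r/4<t<r/2\}$, are dominated by $\|\nabla v\|_{L^2}\|v\|_{L^2}$ on that annulus; interior Caccioppoli applied to $\nabla v$ plus the trivial bound $|v|\le|\nabla u|$ give the $K\|\nabla u\|_{L^2(\Omega)}^2/r$ contribution.

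The main obstacle is a family of residual terms generated by the same integration by parts, of the schematic form $\iint|\nabla A|\,v^2\chi\,dX$ and $\iint|\nabla A||\nabla u||v|\chi\,dX$: a single derivative of $A$ appears without a compensating power of $t$, so the Carleson hypothesis cannot be applied directly. I would handle these by a further integration by parts that transfers the derivative off $A$ and onto $v^2$ (respectively $v\nabla u$). The resulting pieces are either boundary terms already accounted for above, or split by Cauchy--Schwarz into an extra $\epsilon$-multiple of $\iint|\nabla v|^2 t$ (absorbed into the left-hand side) and a Carleson-compatible remainder of the form $\iint|\nabla A|^2|\nabla u|^2 t\,dX$ estimated exactly as before. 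Once these auxiliary terms are settled, assembling all contributions and choosing $\epsilon$ small enough to absorb all $\iint|\nabla v|^2 t$ pieces into the left side gives (\ref{e1}) with the stated dependence of $\varepsilon$ on $C(r_0)$.
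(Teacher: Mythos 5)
Your overall architecture is the paper's: differentiate the equation tangentially, test the resulting commutator equation against $v$ times the weight $t$ (cut off at height $\sim r$), absorb the commutator by Cauchy--Schwarz plus Carleson embedding, read off $\int_{\dom}|\nabla_T u|^2$ from the boundary term, and use interior Caccioppoli for the annulus terms. The first display of (\ref{e1}) and the treatment of the commutator, boundary and annulus terms are all fine.

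The gap is in your last paragraph, i.e.\ precisely in the residual terms you correctly identify as the main obstacle. \textbf{(i)} ``Transferring the derivative off $A$ and back onto $v^2$'' is circular for the worst of these terms. The term $\tfrac12\iint a_{nn}\partial_n(v^2)\chi\,dX$ (coming from $A\nabla v\cdot v\,\nabla\zeta$ with $j=n$) produces, besides the boundary term, the residual $R=\iint(\partial_n a_{nn})v^2\chi\,dX$; integrating $R$ by parts in $t$ to move $\partial_n$ off $a_{nn}$ gives back $-\iint a_{nn}\partial_n(v^2)\chi\,dX$ plus the same boundary and annulus terms, so the two identities are linearly dependent and nothing is gained. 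Nor can $R$ be estimated directly: (\ref{carl}) only gives the pointwise bound $|\nabla A|\lesssim\sqrt{\varepsilon}/t$, and $\iint v^2 t^{-1}\chi\,dX$ diverges logarithmically when $v$ does not vanish on $\dom$; any Cauchy--Schwarz that puts the Carleson weight $t$ on the $|\nabla A|^2$ factor leaves exactly this divergent companion. Inserting $1=\partial_n t$ and integrating by parts in $t$ does not help here either, since it produces $\partial_n^2 a_{nn}$, which is not controlled by (\ref{carl}). \textbf{(ii)} The paper avoids $R$ altogether by normalizing the quadratic form: it works with $a_{ij}/a_{nn}$ rather than $a_{ij}$, so the $(n,n)$ coefficient is identically $1$ and is never differentiated. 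For the remaining residuals --- which after this normalization always carry at least one \emph{tangential} derivative of a coefficient, e.g.\ $\iint(\partial_j a_{nj})v^2\chi$ with $j<n$, or the commutator term $\iint (Ta_{nj})(\partial_j u)v\chi$ --- the correct device is not to move the derivative back onto $v^2$ but to insert the artificial factor $1=\partial_n t$ and integrate by parts in $t$ so as to \emph{gain} the weight $t$; whenever this places two derivatives on a coefficient, one of them is tangential and is moved off by a further tangential integration by parts. Only after these manoeuvres does every surviving $\nabla A$ appear multiplied by $t$, at which point your Cauchy--Schwarz/Carleson argument closes the estimate. As written, your proof cannot close at step (i).
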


\begin{proof} In order to establish (\ref{e1}) we localize to
coordinates. Let $U=U_s$ be one of the sets from Definition \ref{DFrame} with
corresponding map $\varphi=\varphi_s$, equally set
$\widetilde{U}=\widetilde{U_s}$. We can now consider the operator
$L$ as being defined on an open subset $U\times(0,t_0)$ of
$\R^{n}_+$, where $\dom$ corresponds to the hyperplane
$\{(x,0);x\in U\}$. We achieve this by pulling back the
coefficients of $L$ from $\Omega_{t_0}$ to $U\times(0,t_0)$ using
the smooth map $\Phi: (x,t)\mapsto (\varphi(x),t)$.\ At this stage
we also pull back the product metric $d\sigma\otimes dt$ from
$\dom\times (0,t_0)$ to $U\times(0,t_0)$ and we get another
product metric that we (in a slight abuse of notation) still
denote by $d\sigma\otimes dt$ on $U\times(0,t_0)$. \vglue2mm

Since we are going to use a partition of unity we also consider a
smooth cutoff function $\phi(x,t)=\phi(x)$ defined on $\R^{n}_+$,
independent of the $t=x_n$ variable such that
\[0\le \phi(x,t)\le
1,\quad \mbox{supp }\phi\subset \widetilde{U} \times \R.\]

Instead of the left-hand side of (\ref{e1}) (by the ellipticity of
the coefficients) we are going to estimate a similar object

\begin{equation}
\iint_{U\times(0,r)}\frac{a_{ij}}{a_{nn}}(\partial_iV)(\partial_jV)\phi
t\, d\sigma\, dt,\label{SpV}
\end{equation}
for functions $V=\nabla u\cdot \vec{T}_\tau$, for $1\le \tau\le m$. Here
and below we use the summation convention and consider the variable
$t$ to be the $n$-th variable. We begin by integrating by parts
\begin{eqnarray}\label{e2}
&&\iint_{U\times
(0,r)}\frac{a_{ij}}{a_{nn}}(\partial_iV)(\partial_jV)\phi t\,
d\sigma\, dt
=\frac12\int_{U\times\{r\}}\partial_j(|V|^2)\frac{a_{ij}}{a_{nn}}\phi
t\nu_i\, d\sigma-\\
\nonumber&-&\iint_{U\times (0,r)}\frac1{a_{nn}}V(LV)\phi t\,
d\sigma\, dt-\iint_{U\times
(0,r)}V(\partial_jV)a_{ij}\partial_i\left(\frac{\phi
t}{a_{nn}}\right)\, d\sigma\, dt.
\end{eqnarray}
Here $\nu_i$ is the $i$-th component of the outer normal $\nu$,
which (given we consider a product metric) is is just the vector
$e_{n}$ for the boundary $U\times\{r\}$. Hence the first term is
non-vanishing only for $i=n$. We work on the last term, as it is
the most complicated. This one splits into three new terms, one
when the derivative hits $t$ (where only the term with $i=n$ will remain)
and another two when it hits $\phi$ and $1/a_{nn}$:

\begin{eqnarray}\nonumber
-\iint_{U\times (0,r)}V(\partial_jV)\frac{a_{nj}}{a_{nn}}\phi \,
d\sigma\, dt&-&\iint_{U\times
(0,r)}V(\partial_jV)\frac{a_{ij}}{a_{nn}}(\partial_i\phi)t\,
d\sigma\, dt\\&+&\iint_{U\times
(0,r)}V(\partial_jV)\frac{a_{ij}}{a^2_{nn}}(\partial_ia_{nn})\phi
t\, d\sigma\, dt .\label{e3}\end{eqnarray} Consider now the first
term of (\ref{e3}). For $j=n$, as $\phi$ is independent of
$x_{n}=t$, we only get

\begin{equation}
-\frac{1}2\iint_{U\times(0,r)}\partial_n(|V|^{2}\phi) \, d\sigma\,
dt=\frac12\int_{U}|V|^2\phi\,
d\sigma-\frac{1}2\int_{U\times\{r\}}|V|^2\phi\,
d\sigma\label{e4}\end{equation}

For $j<n$ the first term of (\ref{e3}) is handled as follows. We
introduce an artificial term $1=\partial_{n} t$ inside the
integral and integrate by parts.
\begin{eqnarray}\nonumber
&-&\frac{1}2\iint_{U\times(0,r)}\partial_j(|V|^{2})\frac{a_{nj}}{a_{nn}}\phi(\partial_{n}t)
\, d\sigma\,
dt=-\frac12\int_{U\times\{r\}}\partial_j(|V|^{2})\frac{a_{nj}}{a_{nn}}\phi
t\, d\sigma\nonumber\\\label{e5}&+&\frac12
\iint_{U\times(0,r)}\partial_{n}\left(\partial_j(|V|^2)\frac{a_{nj}}{a_{nn}}\phi\right)t
\,d\sigma
dt=-\frac12\int_{U\times\{r\}}\partial_j(|V|^{2})\frac{a_{nj}}{a_{nn}}\phi
t\, d\sigma\\&+&
\frac12\iint_{U\times(0,r)}\partial_j\partial_{n}(|V|^2)\frac{a_{nj}}{a_{nn}}\phi
t \, d\sigma
dt+\frac12\iint_{U\times(0,r)}\partial_j(|V|^2)\partial_{n}\left(\frac{a_{nj}}{a_{nn}}\right)\phi
t \, d\sigma dt. \nonumber\end{eqnarray} The first term here gets
completely cancelled out by the first term of (\ref{e2}) as they
have opposite signs. The second term can be further integrated by
parts and we obtain

\begin{eqnarray}\nonumber
\frac12\iint_{U\times(0,r)}\partial_j\partial_{n}(|V|^2)\frac{a_{nj}}{a_{nn}}\phi
t \, d\sigma
dt&=&-\frac12\iint_{U\times(0,r)}\partial_{n}(|V|^2)\partial_j\left(\frac{a_{nj}}{a_{nn}}\right)\phi
t \, d\sigma dt\\&-&\frac12
\iint_{U\times(0,r)}\partial_{n}(|V|^2)\frac{a_{nj}}{a_{nn}}(\partial_j\phi)t
\, d\sigma dt \label{e6}\end{eqnarray}

We now notice that the last term of (\ref{e3}), the third term on
the righthand side of (\ref{e5}) and the first on the righthand
side of (\ref{e6}) are of same type and are bounded from above by

\begin{equation}
 C \iint_{U\times (0,r)}|V||\nabla V||\nabla A|\phi t \,
d\sigma dt.
 \label{e7}\end{equation}
Here $\nabla A$ stands generically for either $\nabla a_{nj}$,
$\nabla a_{nn}$. Estimating (\ref{e7}) further we see that, using
the Cauchy-Schwarz inequality, (\ref{e7}) is less than

\begin{equation}
C\left(\iint_{U\times (0,r)}|V|^{2}|\nabla A|^2\phi t \, d\sigma
dt\right)^{1/2}\left(\iint_{U\times(0,r)}|\nabla V|^2\phi t \,
d\sigma dt\right)^{1/2}.
 \label{e8}\end{equation}
Using the Carleson condition the coefficients satisfy (we drop the dependance of $\|\mu\|_{Carl}$ on $r_0$ for simplicity of the notation) we get that this
can be further written as

\begin{eqnarray}
 \label{e9}&&
C\|\mu\|_{Carl}^{1/2}\left(\int_{U}N_{r}(V)^{2}dy\right)^{1/2}
\left(\iint_{U\times(0,r)}|\nabla V|^2\phi t \, d\sigma
dt\right)^{1/2} \\\nonumber &\le& \frac{\Lambda^2}2
\iint_{U\times(0,r)}|\nabla V|^2\phi t \, d\sigma dt +
\frac{C^2}{2}\|\mu\|_{Carl}\int_{U}N_{r}(V)^{2}dy,
\end{eqnarray}
where the last line follows from the inequality between arithmetic
and geometric means. We observe that the first term on the second
line is no more than one half of (\ref{SpV}) and hence can be
incorporated there. \vglue2mm

Let us summarize what we have. For some constant $C>0$  we have that

\begin{eqnarray}\label{e10}
&&\iint_{U\times(0,r)}\frac{a_{ij}}{a_{nn}}(\partial_iV)(\partial_jV)\phi
t\, d\sigma\, dt\\
\nonumber&\le&C\|\mu\|_{Carl}\int_{U}N_{r}(V)^{2}dy+\int_{U}|V|^2\phi\,
d\sigma-\int_{U\times\{r\}}|V|^2\phi\,
d\sigma+\\\nonumber&+&\int_{U\times\{r\}}\partial_n(|V|^2)\phi t\,
d\sigma -\iint_{U\times (0,r)}\frac1{a_{nn}}V(LV)\phi t\,
d\sigma\, dt +E.
\end{eqnarray}
The fourth term on the righthand side is the first term of
(\ref{e2}) for $i=j=n$.  Here
\begin{equation}\label{eqERR}
E=-\iint_{U\times
(0,r)}\partial_j(|V|^2)\frac{a_{ij}}{a_{nn}}(\partial_i\phi)t\,
d\sigma\, dt-
\iint_{U\times(0,r)}\partial_{n}(|V|^2)\frac{a_{nj}}{a_{nn}}(\partial_j\phi)t
\, d\sigma dt.
\end{equation}
We call $E$ \lq\lq the error terms" these are the second term of
(\ref{e3}) and the second term on the righthand side of
(\ref{e6}). Both terms are of same type and contain $\partial_i
\phi$ for $i<n$. (Recall that $\partial_{n}\phi=0$).\vglue2mm

At this point we have to use the fact that $V=\nabla u\cdot
\vec{T}_i$, for $1\le i\le m$, where $(\vec{T}_i)_{i=1}^m$ is a
frame from Definition \ref{DFrame}. It follows that in our local
coordinates
\begin{equation}
V=\sum_{k<n}b^kv_k,\qquad\text{for some smooth functions $b^k$ on
$U$}.\nonumber
\end{equation}
Here
$$v_k=\partial_k u=\frac{\partial u}{\partial
x_k},\qquad\text{ for }k=1,2,\dots,n-1.$$ We denote by
$v_n=\partial_t u$. We observe that each $v_k$ is a solution of
the following auxiliary inhomogeneous equation:
\begin{equation}
\div(A\nabla v_k)=Lv_k=-\div((\partial_k A){\bf v})=\div
\vec{F_k},\label{eqDer}
\end{equation}
where the $i$-th component of the vector $\vec{F_k}$ is $(\vec{
F_k})^i=-(\partial_ka_{ij})\partial_ju=-(\partial_ka_{ij})v_j$.
\vglue2mm

It remains to deal with the second term of the last line in
(\ref{e10}). Clearly,
\begin{equation}\label{eLV}
LV=\sum_{k<n} \left[\partial_i(a_{ij}(\partial_j
b^k))v_k+a_{ij}(\partial_j b^k)\partial_iv_k+a_{ij}(\partial_i
b^k)\partial_jv_k+b^kLv_k \right].
\end{equation}

We will have to deal with these four terms. We start with the
second and third ones as they are the easiest. We observe that
since $b^i$ are smooth, both $b^i$ and $\nabla b^i$ actually
satisfy the vanishing Carleson condition. Hence these two terms
put into the expression
\begin{equation}\label{eLV2}
\iint_{U\times (0,r)}\frac1{a_{nn}}V(LV)\phi t\, d\sigma\, dt
\end{equation}
can be estimated by
\begin{equation}\label{e23}
 C\sum_{k<n} \iint_{U\times (0,r)}|\nabla u||\nabla v_k||B|\phi t \,
d\sigma dt,
\end{equation}
where $B$ stands for a generic coefficient such as
$a_{ij}(\partial_j b^k)$ or $a_{ij}(\partial_i b^k)$. Observe that $|B|^2t$
is the density of a vanishing Carleson measure, since the $b^k$ are smooth functions. Hence in the same
spirit as we dealt with (\ref{e7}) we get
\begin{eqnarray}&&\label{e20}
\sum_{k<n} \iint_{U\times (0,r)}|\nabla u||\nabla v_k||B|\phi t \,
d\sigma dt\le\\\nonumber &\le& K
\sum_{k<n}\iint_{U\times(0,r)}|\nabla v_k|^2\phi t \, d\sigma dt +
C(K)\|\mu\|_{Carl}\int_{U}N_{r}(\nabla u)^{2}dy.
\end{eqnarray}
We choose $K$ sufficiently small so that the first term
on the second line of (\ref{e20}) can be hidden on the left-hand
side of (\ref{e1}).\vglue2mm

Next we look at the first term of (\ref{eLV}) as we place it into
(\ref{eLV2}). We obtain
\begin{equation}\label{eLV3}
\left|\iint_{U\times
(0,r)}\frac1{a_{nn}}V\partial_i(a_{ij}(\partial_j b^k))v_k\phi t\,
d\sigma\, dt\right|\le C\iint_{U\times (0,r)}|\nabla u|^{2}|\nabla
B|\phi t\, d\sigma\, dt.
\end{equation}
Now we use the fact that $|\nabla B|^2t$ is the density of a Carleson measure with norm $C\|\mu\|_{Carl}$. Hence, by Cauchy-Schwarz,
\begin{eqnarray}\nonumber
\iint_{U\times (0,r)}|\nabla u|^{2}|\nabla B|\phi t\, d\sigma\,
dt&\le& \left(\iint_{U\times (0,r)} |\nabla u|^2t\,d\sigma
dt\right)^{1/2}\left(\iint_{U\times (0,r)} |\nabla u|^2|\nabla
B|^2t\, d\sigma dt\right)^{1/2}\\\label{e41} &\le&
Cr\|\mu\|_{Carl}^{1/2} \int_U N^2_r(\nabla u) d\sigma.
\end{eqnarray}
Here we observe that the last term on the first line is of the
same type as the first term in (\ref{e8}) we have handled before.
\vglue2mm

We now deal with the last term of (\ref{eLV}) using (\ref{eqDer}).
Placing this into (\ref{eLV2}) yields a term
\begin{eqnarray}&&\label{e22}
\sum_{k<n}\iint_{U\times
(0,r)}\frac1{a_{nn}}Vb^k\partial_i((\partial_ka_{ij})v_j)\phi t\,
d\sigma dt=\\\nonumber&=&-\sum_{k<n}\iint_{U\times
(0,r)}\partial_i\left(\frac{b^k\phi}{a_{nn}}Vt
\right)(\partial_ka_{ij})v_j\, d\sigma
dt+\int_{U\times\{r\}}\frac{b^k\partial_ka_{nj}}{a_{nn}}V v_j\phi
t\,d\sigma,
\end{eqnarray}
where we integrate by parts and only obtain a boundary term when
$i=n$. Now we look at the solid integral. This gives
\begin{eqnarray}&\label{e21}
-&\sum_{k<n}\iint_{U\times
(0,r)}\partial_i\left(\frac{b^k}{a_{nn}}\right)(\partial_ka_{ij})Vv_j\phi
t\, d\sigma dt-\\\nonumber&-&\sum_{k<n}\iint_{U\times
(0,r)}\frac{b^k(\partial_ka_{ij})}{a_{nn}}\partial_iV
v_j\,\phi t\, d\sigma dt-\\
\nonumber&-& \sum_{k<n}\iint_{U\times
(0,r)}\frac{b^k\partial_ka_{ij}}{a_{nn}}Vv_j(\partial_i\phi)t\,
d\sigma dt\,-
\\\nonumber&-&\sum_{k<n}\iint_{U\times
(0,r)}\frac{b^k\partial_ka_{nj}}{a_{nn}}V v_j\,\phi\, d\sigma dt,
\end{eqnarray}
where the last term only appears for $i=n$ (as $\partial_n(t)=1$).
We notice that the first term here is of the same type as the
first term in (\ref{e8}) and hence bounded by $C\|\mu\|_{Carl}
\int_{U}N_r(\nabla u)^2\,d\sigma$. The second term is handled
exactly as (\ref{e7}) (noticing that $|v_j|\le |\nabla u|$). Hence
this term is (in absolute value) no greater than
$$K \iint_{U\times(0,r)}|\nabla V|^2\phi t \, d\sigma dt +
C(K)\|\mu\|_{Carl}\int_{U}N_{r}^2(\nabla u)dy,$$ where $K>0$ can be
arbitrary small. Thus as before by choosing $K$ sufficiently
small this term can be absorbed into the left-hand side of
(\ref{e10}).

The third term of (\ref{e21}) is another \lq\lq error" term of
type similar to (\ref{eqERR}). We will handle this at the end. Hence the
only term remaining is
$$-\sum_{k<n}\iint_{U\times
(0,r)}\frac{b^k\partial_ka_{nj}}{a_{nn}}V v_j\,\phi\, d\sigma
dt=-\sum_{k<n}\iint_{U\times
(0,r)}\frac{b^k\partial_ka_{nj}}{a_{nn}}V v_j\,\phi\partial_n(t)\,
d\sigma dt.$$ Here we introduced an extra term $1=\partial_n(t)$
and now integrate by parts again. This gives
\begin{eqnarray}\label{e24}&-&\sum_{k<n}\int_{U\times\{r\}}\frac{b^k\partial_ka_{nj}}{a_{nn}}V
v_j\,\phi t\, d\sigma dt+\\\nonumber&+&\sum_{k<n}\iint_{U\times
(0,r)}\partial_n\left(\frac{b^k}{a_{nn}}\right)(\partial_ka_{nj})V
v_j\,\phi t\, d\sigma dt+
\\\nonumber &+& \sum_{k<n}\iint_{U\times
(0,r)}\frac{b^k\partial_ka_{nj}}{a_{nn}}V
\partial_nv_j\,\phi t\, d\sigma dt+
\\\nonumber &+& \sum_{k<n}\iint_{U\times
(0,r)}\frac{b^k\partial_ka_{nj}}{a_{nn}}\partial_nV v_j\,\phi t\,
d\sigma dt+
\\\nonumber &+&\sum_{k<n}\iint_{U\times
(0,r)}\frac{b^k(\partial_n\partial_ka_{nj})}{a_{nn}}V v_j\,\phi
t\, d\sigma dt.
\end{eqnarray}

The first four terms are of same type we have encountered before.
The first term here is cancelled by the last term of (\ref{e22}).
The second term is bounded by\newline $C\|\mu\|_{Carl} \int_{U}N^2_r(\nabla
u)\,d\sigma$ (c.f. \eqref{e8}). The third term is like (\ref{e23})
and the fourth like the second term of (\ref{e21}). Finally, in
the last term we have two derivatives on the coefficient
($\partial_n\partial_ka_{nj}$) but only one of the derivatives is in
the normal direction since $k<n$. Hence we integrate by parts one
more time (moving the $\partial_k$ derivative). We get
\begin{eqnarray} &&\sum_{k<n}\iint_{U\times
(0,r)}\frac{b^k(\partial_n\partial_ka_{nj})}{a_{nn}}V
v_j\,\phi t\, d\sigma dt=\\
\nonumber &-& \sum_{k<n}\iint_{U\times
(0,r)}\partial_k\left(\frac{b^k}{a_{nn}}\right)(\partial_na_{nj})V
v_j\,\phi t\, d\sigma dt-
\\\nonumber &-& \sum_{k<n}\iint_{U\times
(0,r)}\frac{b^k\partial_na_{nj}}{a_{nn}}V
\partial_kv_j\,\phi t\, d\sigma dt-
\\\nonumber &-&\sum_{k<n}\iint_{U\times
(0,r)}\frac{b^k\partial_na_{nj}}{a_{nn}}\partial_kV
v_j\,\phi t\, d\sigma dt-\\
\nonumber &-& \sum_{k<n}\iint_{U\times
(0,r)}\frac{b^k\partial_na_{nj}}{a_{nn}}V v_j\,(\partial_k \phi)
t\, d\sigma dt.
\end{eqnarray}
Here the second, third and fourth terms are like the second, third and
fourth terms in (\ref{e24}) and are handled likewise. Finally, the
last term is another of the \lq\lq error terms". This concludes
the analysis of the term (\ref{eLV2}) in (\ref{e10}). \vglue2mm

Finally, we sum over all choices of functions $V=V_\tau=\nabla u\cdot
\vec{T}_\tau$, for $1\le \tau\le m$ and over all sets $U_s$ (from
Definition \ref{DFrame}) choosing the smooth cutoff functions
$\phi=\phi_s$ in (\ref{SpV}) such that they are the partition of
unity, that is
$$\sum \phi_s=1\text{ on }\dom\qquad\text{and}\qquad \text{supp }\phi_s\subset\widetilde{U_s}.$$

We first observe that the terms we called \lq\lq error terms"
completely cancel out. There are the terms in (\ref{eqERR}) plus
two extra terms later on. This is due to the fact that  $\sum_s
(\partial_j \phi_s)=0$. That means that summing over $\tau$ these
terms equal to zero. This cancellation happens even if we work
on different coordinate charts since the term we started our
calculation (\ref{SpV}) does not depend on choice of coordinates.
Hence after taking into account all remaining terms we have by
(\ref{e10}):

\begin{eqnarray}\nonumber &&
 \iint_{\dom\times (0,r)}|\nabla(\nabla_T u(X))|^2
\delta(X)\,dX\approx\\\label{e25}&\approx&
\sum_{i=1}^m\iint_{\dom\times (0,r)}|\nabla V_\tau(X)|^2
\delta(X)\,dX\le
\\\nonumber &\le &K\Bigg[\int_{\dom}
|\nabla_T u|^2\,d\sigma+\|\mu\|_{Carl}\int_{\dom}N^2_r(\nabla u)
d\sigma+\\\nonumber&+&\sum_{\tau=1}^m\left[\int_{\dom\times\{r\}}\partial_n(|V_\tau|^2)
r\, d\sigma -\int_{\dom\times\{r\}}|V_\tau|^2\, d\sigma\right]\Bigg].
\end{eqnarray}

At this point we have to deal with the last two terms
\begin{eqnarray}\nonumber &&\int_{\dom\times\{r\}}\partial_n(|V_\tau|^2)
r\, d\sigma -\int_{\dom\times\{r\}}|V_\tau|^2\, d\sigma=\\
\nonumber&=& \int_{\dom\times\{r\}}\partial_n(|V_\tau|^2t) \, d\sigma
-2\int_{\dom\times\{r\}}|V_\tau|^2\, d\sigma\le
\int_{\dom\times\{r\}}\partial_n(|V_\tau|^2t) \, d\sigma.
\end{eqnarray}

We would like to estimate this by a solid integral by integrating
$r$ over an interval $(0,r')$ and averaging. This yields
\begin{equation}\label{e11}
\frac1{r'}\int_0^{r'}\int_{\dom\times\{r\}}\partial_n(|V_\tau|^2t)
\, dX=\int_{\dom\times\{r'\}}|V_\tau|^2\,d\sigma.
\end{equation}

This term is still not a solid integral so we use the averaging
technique one more time by integrating over $r'$ and averaging
over an interval $(0,r_0)$. This yields a solid integral
$$\frac1{r_0}\iint_{\partial\Omega\times(0,r_0)}|V_\tau|^2\,dX\le \frac1{r_0}\iint_{\partial\Omega\times(0,r_0)}|\nabla u|^2\,dX.$$

Going back to (\ref{e25}) we have to perform this double averaging
procedure on all terms. This leads to introduction of some
harmless weight terms and finally an estimate

\begin{eqnarray}
\label{e26} &&  \iint_{\dom\times (0,r/2)}|\nabla(\nabla_T
u(X))|^2 \delta(X)\,dX\le
\\\nonumber &\le &K\left[\int_{\dom}
|\nabla_T u|^2\,d\sigma+\|\mu\|_{Carl}\int_{\dom}N^2_r(\nabla u)
d\sigma+\frac{1}{r}\|\nabla u\|^2_{L^2(\Omega)}\right].
\end{eqnarray}
\end{proof}

Lemma \ref{l1} deals with square function estimates for tangential
directions. We have following for the normal derivative:

\begin{lemma}\label{l2} Under the same assumptions as in Lemma \ref{l1} we have
\begin{eqnarray}
\label{e30} &&\int_{\partial\Omega}S^2_{r}(\partial_nu)\,d\sigma=
\iint_{\dom\times (0,r)}|\nabla(\partial_n u(X))|^2
\delta(X)\,dX\le
\\\nonumber &\le&K\left[\iint_{\dom\times (0,r)}|\nabla(\nabla_T u(X))|^2
\delta(X)\,dX+\|\mu\|_{Carl}\int_{\dom}N_r^2(\nabla
u)\,d\sigma\right]\\\nonumber
&=&K\left[\int_{\partial\Omega}S^2_{r}(\nabla_Tu)\,d\sigma+\|\mu\|_{Carl}\int_{\dom}N_r^2(\nabla
u)\,d\sigma\right]
\end{eqnarray}
provided $r\le \min\{r_0,t_0\}$. Here $\|\mu\|_{Carl}$ is the Carleson norm (\ref{carl})
of the coefficients on Carleson regions of size at most $r_0$ and $K$ only depends on the domain, ellipticity
constant and dimension $n$.
\end{lemma}

\begin{proof} We integrate by parts in
$\partial\Omega\times(0,r)$. We use the notation introduced above
where we denoted $v_n=\partial_n u$. Clearly
\begin{eqnarray}
\label{e31} &&\iint_{\dom\times (0,r)}|\nabla v_n(X)|^2
\delta(X)\,dX\\\nonumber&=&\iint_{\dom\times (0,r)}|\nabla_T
v_n(X)|^2 \delta(X)\,dX+\iint_{\dom\times (0,r)}|\partial_n
v_n(X)|^2 \delta(X)\,dX=\\
\nonumber&=&\iint_{\dom\times (0,r)}|\partial_n (\nabla_T u(X))|^2
\delta(X)\,dX+\iint_{\dom\times (0,r)}|\partial_n v_n(X)|^2
\delta(X)\,dX
\end{eqnarray}
The first term is clearly controlled by the square function of
$\nabla_T u$. It remains to deal with the second term. Since
$$|a_{nn}\partial_nv_n|^2=|\partial_n(a_{nn}v_n)-\partial_n(a_{nn})v_n|^2\le 2|\partial_n(a_{nn}v_n)|^2+2|\partial_n(a_{nn})v_n|^2.$$
We see that by the ellipticity assumption
\begin{eqnarray}
\label{e32} &&\iint_{\dom\times (0,r)}|\partial_n v_n(X)|^2
\delta(X)\,dX\approx\iint_{\dom\times
(0,r)}(a_{nn}(X))^2|\partial_n v_n(X)|^2 \delta(X)\,dX \le
\\\nonumber &\le& 2\iint_{\dom\times (0,r)}|\partial_n(a_{nn}v_n)|^2
t\,dX+2\iint_{\dom\times (0,r)}|\partial_n(a_{nn})v_n|^2 t\,dX.
\end{eqnarray}
Here as before $X=(x,t)$, i.e. $t$ is the last $n$-th coordinate.
The second term (using the Carleson condition) is bounded by
$C\|\mu\|_{Carl}\int_{\dom}N_r^2(\nabla u)\,d\sigma$. We further
estimate the first term. Using the equation $u$ satisfies we see
that
$$\partial_n(a_{nn}v_n)=-\sum_{(i,j)\ne(n,n)}\partial_i(a_{ij}\partial_j u).$$
From this point on we use local coordinates. It follows that
\begin{eqnarray}
\label{e33} &&\iint_{\dom\times (0,r)}|\partial_n(a_{nn}v_n)|^2
t\,dX\le(n^2-1)\sum_{(i,j)\ne(n,n)}\iint_{\dom\times
(0,r)}|\partial_i(a_{ij}\partial_j u)|^2
t\,dX\\\nonumber&\le&2(n^2-1)\sum_{(i,j)\ne(n,n)}\left[\iint_{\dom\times
(0,r)}|\partial_i(a_{ij})|^2|\partial_j u|^2
t\,dX+\iint_{\dom\times (0,r)}|a_{ij}|^2|\partial_i\partial_j
u|^2t\,dX\right].
\end{eqnarray}
The first term here is of the same type as the last term of
(\ref{e32}) and is bounded by $C\|\mu\|_{Carl}\int_{\dom}N_r^2(\nabla
u)\,d\sigma$. Because $(i,j)\ne(n,n)$
$$|\partial_i\partial_j u|^2\le |\nabla(\nabla_T u)|^2,$$
hence the last term of (\ref{e33}) is also bounded by the square
function of $\nabla_T u$.
\end{proof}

%%%%%%%%%%%%%%%%%%%%%%%%%%%%%%%%%%%%%%%%%%%%%%%%
\section{Comparability of the Nontangential Maximal Function and the Square Function}

If we combine the results of Lemma \ref{l1} and $\ref{l2}$ we
obtain the following local comparison of the square function and the non-tangential maximal function.

\begin{lemma}\label{l3} Under the same assumption as in Lemma \ref{l1}
there exists constants $r_1>0$ and $K>0$ depending only on the geometry
of the domain $\Omega$, ellipticity constant $\Lambda$, dimension
$n$ and the Carleson norm $\|\mu\|_{Carl}$ of coefficients such that
\begin{eqnarray}
\label{e34} &&\int_{\partial\Omega}S^2_{r/2}(\nabla u)\,d\sigma\le
K\int_{\dom}N_r^2(\nabla u) d\sigma,
\end{eqnarray}
for all $r\le \min\{r_0,r_1,t_0\}$.
\end{lemma}

\begin{proof} We observe that first two terms on the righthand
side of (\ref{e1}) can both be bounded by
$K\int_{\dom}N_r^2(\nabla u) d\sigma$. Recall that the last term
$\frac{K}{r}\|\nabla u\|^2_{L^2(\Omega)}$ appears there due to
averaging of (\ref{e11}). This last averaging is however
unnecessary as the righthand side of (\ref{e11}) can be directly
bounded by a multiple of $\int_{\dom}N_r^2(\nabla u) d\sigma$.
From this (\ref{e34}) follows.
\end{proof}

We would like to establish an analogue of
Lemma \ref{l3} for values $p$ different from $2$. In order to do
that we first observe that a local version of Lemma \ref{l3} is
also true:

\begin{lemma}\label{l3loc} Consider an operator $L$ defined on a
subset $2U\times(0,r)$ of ${\R}^n_+$, with $r \simeq \mbox{diam}(U)$. Then there exists $K>0$
depending only on the ellipticity constant $\Lambda$, dimension
$n$ and the Carleson norm of coefficients such that
\begin{eqnarray}
\label{e34b} &&\int_{U\times (0,r)}|\nabla ^2 u| t\,d\sigma\,dt\le
K\int_{2U}N_r^2(\nabla u) d\sigma.
\end{eqnarray}
\end{lemma}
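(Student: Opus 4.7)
The plan is to localize the integration-by-parts arguments from Lemmas \ref{l1} and \ref{l2} to the single patch $2U \times (0,r) \subset \R^n_+$. Since we are already in Euclidean coordinates, both the coordinate frame of Definition \ref{DFrame} and the partition of unity on $\dom$ become unnecessary. I would fix a single smooth cutoff $\phi \in C_c^\infty(2U)$ with $\phi \equiv 1$ on $U$ and $|\nabla \phi| \le C/r$ (here using $r \simeq \mbox{diam}(U)$), and take $V = v_k = \partial_k u$ directly for each tangential index $k = 1, \ldots, n-1$, thereby avoiding the combination $V = \sum b^k v_k$ and dropping all frame-derivative terms that appeared in (\ref{eLV})--(\ref{e24}).

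Running the integration-by-parts scheme from (\ref{SpV}) through (\ref{e10}) with this cutoff bounds $\iint_{2U \times (0,r)} |\nabla v_k|^2 \phi \, t \, dX$ by three kinds of contributions. The Carleson-type pieces are handled exactly as in (\ref{e7})--(\ref{e9}). The boundary terms at height $t = r$ are removed by the double-averaging procedure culminating in (\ref{e26}), which converts them into a solid integral bounded by $\frac{1}{r}\|\nabla u\|_{L^2(2U \times (0,r))}^2 \lesssim \int_{2U} N_r^2(\nabla u) \, d\sigma$, using that the vertical line $(x,t)$ lies inside $\Gamma_a(x)$ for any aperture $a$. The remaining pieces are the ``error terms'' of type (\ref{eqERR}), which in the global setting canceled via the partition-of-unity identity $\sum_\ell \partial_j \phi_\ell = 0$ but here must be absorbed directly.

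The main obstacle is precisely this control of the error terms. Each has the schematic form $\iint |V|\,|\nabla V|\,|\nabla \phi|\,t \, dX$. Using $|\nabla\phi|\,t \le C$ on $\mbox{supp}(\nabla \phi)$ together with Cauchy--Schwarz and AM--GM, I would estimate each by $K \iint |\nabla V|^2 \phi \, t \, dX + C(K) \iint |V|^2 r^{-2} \, t \, dX$. The first term is absorbed into the left-hand side of the main inequality after choosing $K$ small; the second is bounded by a constant multiple of $\int_{2U} N_r^2(\nabla u)\,d\sigma$ because $|V(x,t)| \le N_r(\nabla u)(x)$ along vertical lines and $r^{-2}\int_0^r t\,dt = 1/2$. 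This is the one step in which the scale relation $r \simeq \mbox{diam}(U)$ is essential, and it replaces the partition-of-unity cancellation used globally.

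Finally, to promote the tangential estimate to a full Hessian bound, I would localize Lemma \ref{l2} verbatim: the identity $a_{nn} \partial_n v_n = -\sum_{(i,j)\ne(n,n)} \partial_i(a_{ij} \partial_j u)$ requires no cutoff and recasts $\iint_{U \times (0,r)} |\partial_n^2 u|^2 t\,dX$ as a combination of the already-controlled tangential square function of $\nabla_T u$ plus a Carleson contribution bounded by $\varepsilon \int_{2U} N_r^2(\nabla u)\,d\sigma$. Combining the tangential and normal bounds yields (\ref{e34b}).
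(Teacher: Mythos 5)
Your proposal follows the paper's own proof almost verbatim: localize the integration by parts of Lemmas \ref{l1}--\ref{l2} to one patch, and observe that the only genuinely new issue is the error terms of type \eqref{eqERR}, which no longer cancel and must instead be absorbed using $|\nabla\phi|\le C/r$, Cauchy--Schwarz, and the bound $\iint_{2U\times(0,r)}|\nabla u|^2\,\tfrac{t}{r^2}\,dX\lesssim\int_{2U}N_r^2(\nabla u)\,d\sigma$. This is exactly the paper's argument, including the observation that the averaging terms and the $t=r$ boundary terms are directly dominated by $\int_{2U}N_r^2(\nabla u)\,d\sigma$.

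One inaccuracy is worth correcting. Taking $V=\partial_k u$ does not let you drop everything in \eqref{eLV}--\eqref{e24}: it only removes the terms carrying derivatives of the frame coefficients $b^k$ (the first three terms of \eqref{eLV}). The fourth term survives, because $\partial_k u$ does not solve the homogeneous equation --- one has $L(\partial_k u)=\div((\partial_k A)\nabla u)$ by \eqref{eqDer} --- so the contribution $-\iint a_{nn}^{-1}V(LV)\phi t\,dX$ in \eqref{e2} is nontrivial and still requires the full analysis of \eqref{e22}--\eqref{e24}. That analysis produces additional cutoff error terms (e.g.\ the third term of \eqref{e21}), which are of the schematic form $\iint|\nabla A|\,|\nabla u|^2\,|\nabla\phi|\,t\,dX$ rather than $\iint|\nabla u|\,|\nabla^2u|\,|\nabla\phi|\,t\,dX$; these are controlled by the same Cauchy--Schwarz/Carleson argument, so the proof still closes, but they cannot simply be omitted. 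A second, minor point: for the absorbed piece $K\iint|\nabla V|^2\phi\,t\,dX$ to actually carry the weight $\phi$ you should take the cutoff to be a square, $\phi=\psi^2$, so that $|\nabla\phi|^2\lesssim\phi/r^2$; otherwise the Cauchy--Schwarz leaves you with an unweighted integral over $\mathrm{supp}\,\nabla\phi$, where $\phi$ may vanish.
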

\begin{proof} The proof is essentially same as the proof of Lemma
\ref{l3} since the estimate (\ref{e1}) is based on local
considerations. However, the terms of type \eqref{eqERR} have to
be considered now as they only disappear in the global estimate.
Observe that $|\partial_i\phi|\le C/r$ hence these \lq\lq error"
terms are bounded from above by
$$C\iint_{U\times(0,r)}|\nabla^2 u||\nabla u|\textstyle\frac{t}{r}d\sigma\,dt.$$
By Cauchy-Schwarz this can be further bounded by
$$C\left(\iint_{U\times(0,r)}|\nabla^2 u|^2t\,d\sigma\,dt\right)^{1/2}\left(\iint_{U\times(0,r)}|\nabla u|^2\textstyle\frac{t}{r^2}d\sigma\,dt\right)^{1/2}.$$
Since $|\nabla u(X)|\le N(\nabla u)(Q)$ for all $X\in \Gamma(Q)$
the term $\iint_{U\times(0,r)}|\nabla
u|^2\textstyle\frac{t}{r^2}d\sigma\,dt$ is further bounded by
$$\int_U{\frac1r}\left(\int_0^r N^2_r(\nabla u)(Q){\textstyle\frac{t}{r}}dt\right)d\sigma(Q)\le \int_U N^2_r(\nabla u)d\sigma.$$
From this (\ref{e34b}) follows.
\end{proof}

We claim that Lemma \ref{l3loc} implies that the square function
is controlled by the non-tangential maximal function in $L^p$ for $p>2$ as
well.

\begin{lemma}\label{lSNp} Under the same assumptions as in Lemma \ref{l1}
for any $p\ge 2$ there exists $r_1>0$ and $K=K(\Omega,\Lambda,n,\|\mu\|_{Carl},p)>0$
such that
\begin{eqnarray}
\label{e34a} &&\int_{\partial\Omega}S^p_{r/2}(\nabla
u)\,d\sigma\le K\int_{\dom}N_r^p(\nabla u) d\sigma,
\end{eqnarray}
for all $r\le \min\{r_0,r_1,t_0\}$.
\end{lemma}
\begin{proof}
The lemma has already been proved when $p = 2$, since then it is just the statement of Lemma \ref{l3}, so we only need to consider $p>2$. Moreover, it suffices to prove \eqref{e34a} on each coordinate patch ${U}_s$ for $s = 1,2,\dots,k$. In fact, we can go slightly further and say it is sufficient to prove
\begin{eqnarray}
\label{e34a2} &&\int_{{U}_s^0}S^p_{r/2}(\nabla
u)\,d\sigma\le K\int_{{U}_s^0}N_r^p(\nabla u) d\sigma
\end{eqnarray}
for each $s$, where $\widetilde{U_s} \subseteq {U}_s^0 \subseteq U_s$. Because we only need to consider $p > 2$, Lemma 2 on page 152 of \cite{St} shows that to prove \eqref{e34a2} it is sufficient to show the relative distributional inequality
\begin{equation} \label{distineq}
\begin{aligned}
& |\{x \in U_s^0 \, | \, S_{[a],r/2}(\nabla u)(x) > 2\lambda, M(N_r(\nabla u)^2)(x)^\frac{1}{2} \leq \alpha\lambda \}| \\
& \leq C\alpha^2 |\{x \in {U}_s^0 \, | \, S_{[2a],r/2}(\nabla u)(x) > \lambda\}|,
\end{aligned}
\end{equation}
where $M$ is the Hardy-Littlewood maximal function on $U_s^0$.

Now, for each $s$, we describe a localised Whitney decomposition of the set where $S_{[2a],r/2}(\nabla u) > \lambda$ (c.f.~\cite[A-34]{G}, which we follow here). First for each $s$ we find a finite number of cubes $Q_{s,j}$ for which $\widetilde{U_s} \subseteq \cup_j Q_{s,j} \subseteq U_s$ and the side length $\ell(Q_{s,j})$ of $Q_{s,j}$ is comparable with $r$. We denote $P_{s,j} = \{x \in Q_{s,j} \, | \, S_{[2a],r/2}(\nabla u)(x) > \lambda\}$ and $K_{s,j} = \{x \in Q_{s,j} \, | \, S_{[2a],r/2}(\nabla u)(x) \leq \lambda\}$

Fix a pair $(s,j)$. If $K_{s,j}$ is empty, define $\mathcal{F}_{s,j} := \{Q_{s,j}\}$. If $K_{s,j}$ is non-empty we will define $\mathcal{F}_{s,j}$ to be a collection of dyadic sub-cubes of $Q_{s,j}$ in the following way. First observe that we can write $P_{s,j}$ as the union of
\[
P_{s,j}^k = \{x \in P_{s,j} \, | \, 2\ell(Q_{s,j})\sqrt{n}2^{-k} < \dist(x,K_{s,j}) \leq 4\ell(Q_{s,j})\sqrt{n}2^{-k}\}
\]
for $k \in \N$.

We can find $2^{n-1}$ dyadic sub-cubes of $Q_{s,j}$ by bisecting each side of $Q_{s,j}$. We denote the collection of these $2^{n-1}$ cubes as $D_{s,j}^1$ and each cube in the collection has side length equal to $\ell(Q_{s,j})/2$. Equally, we can find $2^{n-1}$ dyadic sub-cubes of each cube in $D_{s,j}^1$ by again bisecting each side of it. Thus, we have $2^{2(n-1)}$ subcubes of the cubes in $D_{s,j}^1$ which have $\ell(Q_{s,j})/2^2$. We denote the collection of these $2^{2(n-1)}$ cubes by $D_{s,j}^2$. Continuing inductively $D_{s,j}^k$ is a collection of $2^{k(n-1)}$ dyadic cubes with side length equal to $\ell(Q_{s,j})/2^k$.

Let $\mathcal{F}'_{s,j}$ be the collection of all cubes $Q$ in $D_{s,j}^k$ for some $k \in \N$ such that $Q \cap P_{s,j}^k \neq \emptyset$. Let $Q \in \mathcal{F}'_{s,j}$ and pick $x \in Q \cap P_{s,j}^k$. Observe that
\begin{align*}
s(Q_{s,j})2^{-k}\sqrt{n-1} & = \dist(x,K_{s,j}) - s(Q_{s,j})2^{-k}\sqrt{n-1} = \dist(x,K_{s,j}) - s(Q)\sqrt{n-1} \\
& \leq \dist(Q,K_{s,j}) \leq \dist(x,K_{s,j}) \leq  4s(Q_{s,j})2^{-k}\sqrt{n-1}
\end{align*}
and so,
\begin{equation} \label{whit}
\ell(Q)\sqrt{n-1} \leq \dist(Q,K_{s,j}) \leq  4\ell(Q)\sqrt{n-1}.
\end{equation}
Given that $\mathcal{F}'_{s,j}$ is a collection of dyadic cubes, any two cubes which intersect have the property that one is contained in the other. Thus, we may define $\mathcal{F}_{s,j}$ to be the set of cubes $Q \in \mathcal{F}'_{s,j}$ such that if $Q' \in \mathcal{F}'_{s,j}$ and $Q \cap Q' \neq \emptyset$, then $Q' \subseteq Q$. That is $\mathcal{F}_{s,j}$ is the set of maximal cubes in $\mathcal{F}'_{s,j}$. Clearly then, $\mathcal{F}_{s,j}$ is a collection of disjoint dyadic cubes.

Our Whitney decomposition of $\{x \in U_s \, | \, S_{[a],r/2}(\nabla u)(x) > \lambda\}$ is then the collection
\[
\mathcal{F}_s := \bigcup_{j} \mathcal{F}_{s,j}.
\]
This collection has the properties that each $Q \in \mathcal{F}_s$ is such that either \eqref{whit} holds or $\ell(Q) \simeq r$,
\[
\bigcup_{Q \in \mathcal{F}_s} Q = \{x \in \cup_j Q_{s,j} \, | \, S_{[2a],r/2}(\nabla u)(x) > \lambda\},
\]
and there exists a constant $C$ such that there are at most $C$ cubes that intersect at any given point.

Fix $Q \in \mathcal{F}_s$ and set
\[
R := \{x \in Q \, | \, S_{r/2}(\nabla u)(x) > 2\lambda, M(N_r(\nabla u)^2)(x)^\frac{1}{2} \leq \alpha\lambda \}
\]
If $x \in R$ and \eqref{whit} holds for $Q$, then there exists $x'$ such that $\dist(x,x') \leq 4\sqrt{n}\ell(Q)$ and $S_{[2a],r/2}(\nabla u)(x') \leq \lambda$. Consequently there exists a constant $\alpha$ such that
\begin{equation} \label{two}
\begin{aligned}
S_{[a],\alpha\ell(Q)}(\nabla u)^2(x) & \geq S_{[a],r/2}(\nabla u)^2(x) - \iint_{\Gamma_{[a],r/2}(x) \cap (\R^{n-1} \times (\alpha\ell(Q),r/2))} |\nabla^2 u|^2 t^{2-n}d\sigma dt \\
& \geq S_{[a],r/2}(\nabla u)^2(x) - S_{[2a],r/2}(\nabla u)^2(x') \\
& \geq 4\lambda^2 - \lambda^2 = 3\lambda^2
\end{aligned}
\end{equation}
Then, if $R$ is non-empty (say $x_0 \in R$), we can apply Lemma \ref{l3loc} to conclude that
\begin{equation} \label{one}
\begin{aligned}
|R| & \leq \frac{1}{3\lambda^2} \int_{Q} S_{[a],\alpha\ell(Q)}(\nabla u)^2 d\sigma
\leq \frac{C}{\lambda^2} \int_{Q \times (0,\alpha\ell(Q))} |\nabla^2 u|^2 t \, d\sigma dt \\
& \leq \frac{CK}{\lambda^2} \int_{2Q} N_{\alpha\ell(Q)}(\nabla u)^2 d\sigma
\leq \frac{2^nCK|Q|}{\lambda^2} M(N_r(\nabla u)^2)(x_0)
\leq 2^nCK\alpha^2|Q|.
\end{aligned}
\end{equation}
Furthermore, if \eqref{whit} does not hold, then $r/2 \simeq \ell(Q)$, so we may repeat \eqref{one} without the need for \eqref{two}. Finally, we observe that the inequality $|R| \leq 2^nCK\alpha^2|Q|$ is trivial if $R$ is empty. Thus, summing over $Q \in \mathcal{F}_s$ we obtain \eqref{distineq} with $U_s^0 = \cup_j Q_{s,j}$.
\end{proof}

Now we would like to establish the converse inequality, namely that
the non-tangential maximal function can be dominated by the square
function. As we shall see in the proof we will have to assume {\it
small} Carleson norm $\|\mu\|_{Carl}$ of the coefficients. We start with the following local lemma
working in coordinates on ${\mathbb R}^n_+$ with boundary
${\mathbb R}^{n-1}$.

\begin{lemma}\label{l4} Let $Lu=0$ where $L=\div
A(\nabla \cdot)$ is a uniformly elliptic differential operator
defined on a neighborhood $U$ of $0$ in $\R^n_+$. As before let
(\ref{carl}) be the density of a Carleson measure with norm $\|\mu\|_{Carl}$ on all Carleson
regions of size at most $r_0$.

Let $\phi$ be a non-negative Lipschitz function  and let $Q$ be a
cube in ${\mathbb R}^{n-1}$ with $r=\text{diam}(Q)$. Suppose that
$\phi(x)\le 12r/a$ for $x\in Q^*$. Here $Q^*$ is a dilated $Q$ by
factor of $5$ and $Q^*\times[0,100r]\subset U$. Then if $\|\nabla\phi\|_{L^\infty({\mathbb
R}^{n-1})}$ is sufficiently small, there are exist $a$ (c.f.
Definition \ref{dgamma}) and $C=C(\Lambda,\|\nabla\phi\|_{L^\infty({\mathbb
R}^{n-1})},a)>0$  such that

\begin{eqnarray}
\nonumber \|\nabla u(.,\phi(.))\|^2_{L^2(Q)}\le &C&(\|S(\nabla
u)\|_{L^2(Q^*)}^2+\|\mu\|_{Carl}\|N(\nabla
u)\|^2_{L^2(Q^*)}\\\label{e35}&+&\|N(\nabla
u)\|_{L^2(Q^*)}\|S(\nabla u)\|_{L^2(Q^*)}+r^{n-1}|\nabla
u(X_r)|^2),
\end{eqnarray}
where $X_r$ is an arbitrary corkscrew point, i.e., any point in
$\{X=(x,t); \phi(x)+r/2\le t\le \phi(x)+6r/a\}$. The square and
non-tangential maximal function in (\ref{e35}) are defined using
non-tangential cones $\Gamma_a(.)$. Both square function and
non-tangential maximal functions on the righthand side can be
truncated at a height that is a multiple of $r$.
\end{lemma}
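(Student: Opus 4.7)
\emph{Proof plan.} To prove (\ref{e35}), I apply the divergence theorem on $\Omega_\phi = \{(x,t): t > \phi(x)\}$ to the vector field $\rho|\nabla u|^2 e_n$, where $\rho(x,t) = \chi(x)\eta(t)$ is a Lipschitz product cutoff with $\chi \in C_c^\infty(Q^*)$ equal to $1$ on $Q$ and $|\nabla\chi|\lesssim 1/r$, and $\eta$ a smooth function equal to $1$ on $[0, 13r/a]$, vanishing on $[14r/a, \infty)$, with $|\eta'|\lesssim a/r$. The hypothesis $\phi(x) \leq 12r/a$ on $Q^*$ ensures $\rho(x,\phi(x)) \equiv 1$ on $Q$, and the divergence theorem (using that the $e_n$-component of the outward normal to $\Omega_\phi$ at the graph is $-1/\sqrt{1+|\nabla\phi|^2}$) gives
\begin{equation*}
\int_Q |\nabla u(x,\phi(x))|^2 \,dx \leq -\iint_{\Omega_\phi} \chi\eta' |\nabla u|^2\,dx\,dt - 2\iint_{\Omega_\phi} \chi\eta\,\nabla u\cdot\partial_t\nabla u\,dx\,dt =: I + II.
\end{equation*}

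The term $I$ is supported on the strip $\{13r/a<t<14r/a\}$, on which Moser's interior estimate for the components $v_k = \partial_k u$ (each solving the inhomogeneous equation (\ref{eqDer})) combined with a Harnack chain gives $|\nabla u(X)|^2 \lesssim |\nabla u(X_r)|^2$ uniformly, hence $|I| \lesssim r^{n-1}|\nabla u(X_r)|^2$. For the main term $II$, I use the commutation $\partial_t v_k = \partial_k v_n$ for $k<n$ together with the equation $Lu = 0$ solved for $\partial_n v_n$,
\begin{equation*}
a_{nn}\partial_n v_n = -(\partial_n a_{nn})v_n - \sum_{(i,j)\neq (n,n)}\bigl[(\partial_i a_{ij})v_j + a_{ij}\partial_i v_j\bigr],
\end{equation*}
to split $\nabla u\cdot\partial_t\nabla u$ into (a) tangential-derivative terms $v_j\,\partial_i v_k$ with $i<n$, and (b) $\nabla A$-weighted terms $(\partial_i a_{ij})v_j v_k$. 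Cauchy--Schwarz with weight $t$, combined with Fubini's identity $\|S(\nabla u)\|^2_{L^2(Q^*)} \simeq \iint_{Q^*\times(0,\infty)} t|\nabla^2 u|^2\,dx\,dt$, bounds the type-(a) contribution by $\|S(\nabla u)\|_{L^2(Q^*)}\|N(\nabla u)\|_{L^2(Q^*)}$, while a further Cauchy--Schwarz together with the Carleson hypothesis that $|\nabla A|^2 t$ has Carleson norm $\varepsilon$ bounds the type-(b) contribution by $\varepsilon\|N(\nabla u)\|^2_{L^2(Q^*)}$.

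The main obstacle is the apparent divergence of the auxiliary $\iint |\nabla u|^2/t$ factor produced by the above Cauchy--Schwarz argument as $t \to 0$ on the set $\{\phi(x)=0\}$. I would handle this by splitting the $t$-integration into $\{t\geq r\}$, on which $1/t\leq 1/r$ and a direct estimate yields the $\|S\|\|N\|$ contribution, and $\{t<r\}$, on which an additional integration by parts using the smallness of $\|\nabla\phi\|_\infty$ trades the singular part either for admissible cross-terms $\|S\|\|N\|$, for Carleson-small terms $\varepsilon\|N\|^2$, or for the corkscrew boundary contribution $r^{n-1}|\nabla u(X_r)|^2$, with any residual multiple of $\int_Q|\nabla u(\cdot,\phi(\cdot))|^2$ absorbable into the left-hand side provided $\|\nabla\phi\|_\infty$ is taken sufficiently small.
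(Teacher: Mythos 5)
There is a genuine gap, and it sits exactly at the point you flag as ``the main obstacle.'' Your plan applies Cauchy--Schwarz to the unweighted solid integrals coming from $II$ by artificially splitting $1=t^{1/2}\cdot t^{-1/2}$, which produces the factor $\iint |\nabla u|^2 t^{-1}\,dX$; this diverges logarithmically near $\{t=0\}$, and the divergence has nothing to do with $\|\nabla\phi\|_\infty$ --- it is already present when $\phi\equiv 0$. So ``an additional integration by parts using the smallness of $\|\nabla\phi\|_\infty$'' cannot repair it. The mechanism the paper uses (and which your outline is missing) is to introduce the weight $t$ \emph{before} estimating: write the unweighted integrand as $F\,\partial_n(t)$ and integrate by parts in $t$. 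Because of the factor $t$ there is no boundary term at $t=0$, and one obtains $-\iint(\partial_n F)\,t\,dX$ plus a controlled top boundary term. After this step every solid term carries the weight $t$, so Cauchy--Schwarz pairs $\iint|\nabla^2 u|^2 t$ (the square function) against either $\iint|\nabla A|^2|\nabla u|^2 t$ (Carleson, giving $\varepsilon\|N\|^2$) or $\iint|\nabla u|^2\,t/r^2\lesssim\|N\|^2_{L^2}$, and no singular integral ever appears. This trick is used repeatedly in the paper (see the terms $\mbox{I}$, $\mbox{II}$, $\mbox{III}$ and $\widetilde{\mbox{I}}$, $\widetilde{\mbox{II}}$, $\widetilde{\mbox{III}}$ in the proof, and the analogous step \eqref{e5} in Lemma \ref{l1}), and without it the estimate does not close.

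Two further points. First, your bound for $I$ via ``Moser plus a Harnack chain gives $|\nabla u(X)|^2\lesssim|\nabla u(X_r)|^2$ uniformly'' is false: $\nabla u$ is a vector of sign-changing functions, so no Harnack comparison holds (it fails whenever $\nabla u(X_r)$ happens to be small). The correct argument splits $r^{-1}\iint_K w_i^2\lesssim r^{n-1}\,\mbox{osc}_K(w_i)^2+r^{n-1}|w_i(X_r)|^2$ and controls the oscillation by \cite[Thm 8.17]{GT} applied to the inhomogeneous equation \eqref{eqDer} plus Poincar\'e; this is why the $\|S\|^2$ and $\varepsilon\|N\|^2$ terms also appear in that piece of \eqref{e35}. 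Second, the paper first flattens $\Omega_\phi$ to $\R^n_+$ with the Dahlberg--Ne\v{c}as--Stein map \eqref{eDKS}, absorbing the small Lipschitz constant into the coefficients $B$; working directly on the graph domain as you propose makes the key $1=\partial_n(t)$ integration by parts produce a nonvanishing boundary term on $\{t=\phi(x)\}$ wherever $\phi>0$, an additional complication your outline does not address.
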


\begin{proof} Recall the mapping $\Phi:{\mathbb
R}^n_+\to\Omega_{\phi}=\{X=(x,t);t>\phi(x)\}$ used by Dahlberg,
Keing and Stein (see for example \cite{D} or \cite{N} and many
others) defined as
\begin{equation}
\Phi(X)=(x,c_0t+(\theta_t*\phi)(x)),\label{eDKS}
\end{equation}
where $(\theta_t)_{t>0}$ is smooth compactly supported approximate
identity and $c_0$ can be chosen large enough (depending only on
$\|\nabla\phi\|_{L^\infty({\mathbb R}^{n-1})}$ so that $\Phi$ is
one to one. We pull back the solution $u$ in $\Omega_{\phi}$ of
div$(A\nabla u)=0$ to a solution $v=u\circ\Phi$ of a different
second order elliptic equation div$(B\nabla v)=0$.

The coefficient matrix $B$ satisfies ellipticity condition with
constant that is a multiple of $\Lambda$ and which depends on
$\|\nabla\phi\|_{L^\infty({\mathbb R}^{n-1})}$. Also if
$\|\mu\|_{Carl}$ is the Carleson norm of
\begin{equation}\nonumber
d\mu=\sup \{ t|\nabla a_{ij}(Y)|^2 \;:\;Y\in B_{t/2}((x,t)) \}dX,
\end{equation}
then the Carleson norm of
\begin{equation}\nonumber
d\mu'=\sup \{ t|\nabla b_{ij}(Y)|^2 \;:\;Y\in B_{t/2}((x,t)) \}dX,
\end{equation}
for $B=(b_{ij})$ will only depend on $\|\mu\|_{Carl}$ and $\|\nabla
\phi\|_{L^\infty}$. Furthermore, if $\|\nabla \phi\|_{L^\infty}$ is small
enough, then the Carleson norm of the matrix $B$ can be guaranteed
to be at most $2\|\mu\|_{Carl}.$

We choose a smooth function $\xi_1:{\mathbb R}^{n-1}\to \mathbb R$
such that $\xi_1(x)=1$ for $x\in Q$, $|\xi_1'|\le16/r$ and support
contained in a concentric dilation $(9/8)Q$. Choose another
function $\xi_2:[0,\infty)\to \mathbb R$ such that $\xi_2(t)=1$ on
$[0,r]$, $|\xi_2'|\le 5/r$ and support contained in $[0,2r]$. Now
define $\xi(X)=\xi(x,t)=\xi_1(x)\xi_2(t)$.

Denote by $w_i=\partial_iv$ for $i=1,2,\dots,n$. For each $i\le
n-1$ we have
\begin{eqnarray}
\nonumber \int_{{\mathbb
R}^{n-1}}w_i(x,0)^2\xi_1(x)dx&=&-\iint_{{\mathbb
R}^{n}_+}\partial_n(w_i^2\xi)(X)\,dX\\
\label{e36} &=&-\iint_{{\mathbb
R}^{n}_+}2w_i(\partial_nw_i)\xi\,dX-\iint_{{\mathbb
R}^{n}_+}w_i^2\xi_1\xi_2'\,dX.
\end{eqnarray}
The second term on the right-hand side of (\ref{e36}) is
controlled by $r^{-1}\iint_Kw_i^2$ where $K=\{X=(x,t);x\in
Q^*\text{ and }r/3\le t\le 7r/a\}$. Let $X_r$ be any point in $K$
and choose $K'$ and $K''$ to be the appropriate concentric
enlargements of $K$. We set $c=\frac1{K'}\iint_{K'}w_i$. Using
\cite[Thm 8.17]{GT} we may further estimate this term by

\begin{eqnarray}
\nonumber&& r^{-1}\iint_{K} (w_i - w_i(X_r))^2 \, dX +
r^{-1}\iint_{K} w_i^2(X_r) \,
dX \\
\nonumber&\le& Cr^{n-1}\, \text{osc}_{K}(w_i)^2 + Cr^{n-1}|w_i(X_r)|^2 \\
\nonumber&\le& Cr^{n-1} \sup_{K}|w_i-c|^2 + Cr^{n-1}|w_i(X_r)|^2 \\
\nonumber&\le& Cr^{-1}\iint_{K'} |w_i-c|^2\, dX +
Cr^{n-1+2(1-n/q)}\|(\partial_i B){\bf w}\|_{L^q(K')}^2 +
Cr^{n-1}|w_i(X_r)|^2,
\end{eqnarray}
for $q>n$. Here we are using (\ref{eqDer}) with matrix $A$
replaced by $B$. Using Poincar\'e's inequality and the Carleson
condition for $B$ this can be further estimated by
\begin{eqnarray}
\nonumber&& C\left[\iint_{K''} |\nabla (w_i)|^2r\, dX + \|\mu\|_{Carl}
\|N(\nabla u)\|_{L^2(Q^*)}^2 +
r^{n-1}|w_i(X_r)|^2\right] \\
\label{e39}&\le&  C\left[ \|S(\nabla u)\|_{L^2(Q^*)}^2 +
\|\mu\|_{Carl}\|N(\nabla u)\|_{L^2(Q^*)}^2 + r^{n-1}|w_i(X_r)|^2\right].
\end{eqnarray}

The first term on the righthand side of (\ref{e36}) can be estimated by
\begin{eqnarray}
\nonumber &-&\iint_{{\mathbb R}^{n}_+}2w_i(\partial_nw_i)\xi\,dX\\
\nonumber& =& -\iint_{\R^n_+} {2w_i (\partial_nw_i)\xi}
\partial_n(t) \, dX = 2\iint_{\R^n_+} [\partial_n({w_i
(\partial_nw_i)\xi})]
t \, dX \\
\nonumber& =& 2\iint_{\R^n_+} (\partial_nw_i)^2\xi {t \,dX} +
2\iint_{\R^n_+} w_i (\partial^2_nw_i)\xi {t \, dX}  +
2\iint_{\R^n_+} w_i (\partial_nw_i)\xi_1\xi_2'
{t \, dX} \\
\nonumber&& =: \mbox{I} + \mbox{II} + \mbox{III}.
\end{eqnarray}
Using the fact that $i\le n-1$ we see that $\partial^2_nw_i$ in
the term II can be written as $\partial_i\partial_nw_n$. This
gives
\begin{eqnarray}
\nonumber
\mbox{II}&=&-2\iint_{\R^n_+}(\partial_nw_n)\partial_i(w_i\xi)t\,dX\\
\nonumber&=&-2\iint_{\R^n_+}(\partial_nw_n)\partial_i(w_i)\xi
t\,dX-2\iint_{\R^n_+}(\partial_nw_n)w_i\partial_i(\xi_1)\xi_2t\,dX\\
\nonumber&=& \mbox{II}_1 + \mbox{II}_2.
\end{eqnarray}

We observe that the terms I and $\mbox{II}_1$ are both bounded by
the square function $\|S_{2r}({\bf w})\|_{L^2(Q^*)}^2$. This is further bounded by $\|S(\nabla
u)\|_{L^2(Q^*)}^2$, where the square function is truncated at a greater height or not truncated at all. For $\mbox{II}_2$
and III we have
\begin{eqnarray}
\nonumber \mbox{II}_2+\mbox{III}&\le&
\frac{C}r\iint_{Q^*\times(0,2r)}|\nabla {\bf w}||{\bf w}| t\,dX\\
\nonumber&\le& C\left(\iint_{Q^*\times(0,2r)}|\nabla {\bf w}|^2
t\,dX\right)^{1/2}\left(\iint_{Q^*\times(0,2r)}|{\bf w}|^2
\textstyle\frac{t}{r^2}\,dX\right)^{1/2}\\
\nonumber&\le& C\|S({\bf
w})\|_{L^2(Q^*)}\left(\int_{Q^*}\frac{1}{r}\int_0^{2r}|{\bf w}|^2
\,dt\,dx\right)^{1/2}\\\nonumber&\le& C\|S({\bf
w})\|_{L^2(Q^*)}\left(\int_{Q^*}\frac{2r}{r}|N({\bf w})|^2
\,dx\right)^{1/2}=C\|S({\bf w})\|_{L^2(Q^*)}\|N({\bf
w})\|_{L^2(Q^*)}.
\end{eqnarray}
This bounds (\ref{e36}) by terms that appear on the righthand side
of (\ref{e35}).\vglue2mm

It remains to estimate $\int_{{\mathbb
R}^{n-1}}w_n(x,0)^2\xi_1(x)dx$. We estimate instead an expression
for co-normal derivative $H=\sum_{j}b_{nj}w_j$. This is sufficient
since
\begin{eqnarray}
\nonumber &&\int_{{\mathbb R}^{n-1}}w_n(x,0)^2\xi_1(x)dx\approx
\int_{{\mathbb
R}^{n-1}}(b_{nn}w_n)^2(x,0)\xi_1(x)dx\\
\label{e37} &\le& n\left[\int_{{\mathbb
R}^{n-1}}H^2\xi_1\,dx+\sum_{j<n}\int_{{\mathbb
R}^{n-1}}(b_{nj}w_j)^2\xi_1\,dx\right]\\
\nonumber &\le& n\int_{{\mathbb
R}^{n-1}}H^2\xi_1\,dx+C\sum_{j<n}\int_{{\mathbb
R}^{n-1}}w_j^2(x,0)\xi_1(x)\,dx
\end{eqnarray}
Hence if we can obtain estimates for the first term we are done
since the second term has already been bounded. We proceed as
before.
\begin{eqnarray}
\nonumber \int_{{\mathbb
R}^{n-1}}H(x,0)^2\xi_1(x)dx&=&-\iint_{{\mathbb
R}^{n}_+}\partial_n(H^2\xi)(X)\,dX\\
\label{e38} &=&-\iint_{{\mathbb
R}^{n}_+}2H(\partial_nH)\xi\,dX-\iint_{{\mathbb
R}^{n}_+}H^2\xi_1\xi_2'\,dX.
\end{eqnarray}
As before we observe that the second term can be bounded by
$r^{-1}\sum_{i}\iint_Kw_i^2$. The calculation we have done above
holds for any $i$ even $i=n$ giving us bound (\ref{e39}).

It remains to deal with the first term. Using the equation
div$(B\nabla v)=0$
$$\partial_nH=\sum_{j}\partial_n(b_{nj}\partial_jv)=-\sum_{i<n}\partial_i(b_{ij}\partial_jv)=-\sum_{i<n}\partial_i(b_{ij}w_j).$$
It follows that
\begin{eqnarray}
\nonumber &-&\iint_{{\mathbb
R}^{n}_+}2H(\partial_nH)\xi\,dX\\\label{e40}&=&\sum_{i<n}\iint_{{\mathbb
R}^{n}_+}2H\partial_i(b_{ij}w_j)(\partial_nt)\xi\,dX=-\sum_{i<n}\iint_{{\mathbb
R}^{n}_+}2\partial_n(H\partial_i(b_{ij}w_j)\xi)t\,dX\\
\nonumber&=&-\sum_{i<n}\iint_{{\mathbb
R}^{n}_+}2(\partial_nH)\partial_i(b_{ij}w_j)\xi
t\,dX-\iint_{{\mathbb
R}^{n}_+}2H\partial_i\partial_n(b_{ij}w_j)\xi
t\,dX\\\nonumber&&\hskip1mm-\iint_{{\mathbb
R}^{n}_+}2H\partial_i(b_{ij}w_j)\xi_1\xi_2' t\,dX =
\widetilde{\mbox{I}}+\widetilde{\mbox{II}}+\widetilde{\mbox{III}}.
\end{eqnarray}
As before we do further integration by parts for the term
$\widetilde{\mbox{II}}$.
\begin{eqnarray}
\nonumber \widetilde{\mbox{II}}&=&\iint_{{\mathbb
R}^{n}_+}2\partial_n(b_{ij}w_j)\partial_i(H\xi)
t\,dX\\
\nonumber&=&2\iint_{{\mathbb
R}^{n}_+}2\partial_n(b_{ij}w_j)(\partial_iH)\xi
t\,dX+\iint_{{\mathbb
R}^{n}_+}2\partial_n(b_{ij}w_j)H(\partial_i\xi_1)\xi_2
t\,dX\\
\nonumber&=& \widetilde{\mbox{II}_1} + \widetilde{\mbox{II}_2}.
\end{eqnarray}
We observe that when the derivative in terms $\widetilde{\mbox{II}_2}$ and $\widetilde{\mbox{III}}$ does not
hit the coefficients $b_{ij}$ these can be estimated exactly as the
corresponding terms $\mbox{II}_2$ and III. When
the derivative falls on the coefficient we get \lq\lq error terms"
that can be estimated using the Carleson measure property of the coefficients.
In particular the term from $\widetilde{\mbox{III}}$ is of the same form as (\ref{e41}) and is handled analogously.
The term we obtain
from $\widetilde{\mbox{II}_2}$ is of a different nature and can be
bounded above by
$$\iint_{Q^*\times(0,2r)}|{\bf w}|^2|\nabla B|\textstyle{\frac{t}r}\,dX.$$
By Cauchy-Schwarz this is no more than
\begin{eqnarray}
\nonumber& & C\left(\iint_{Q^*\times(0,2r)}|\nabla B|^2|{\bf
w}|^2 t\,dX\right)^{1/2}\left(\iint_{Q^*\times(0,2r)}|{\bf w}|^2
\textstyle\frac{t}{r^2}\,dX\right)^{1/2}\\
\nonumber&\le& C\|\mu\|_{Carl}^{1/2}\|N({\bf
w})\|_{L^2(Q^*)}\left(\int_{Q^*}\frac{1}{r}\int_0^{2r}|{\bf w}|^2
\,dt\,dx\right)^{1/2}\\\nonumber&\le& C\|\mu\|_{Carl}^{1/2}\|N({\bf
w})\|_{L^2(Q^*)}\left(\int_{Q^*}\frac{2r}{r}|N({\bf w})|^2
\,dx\right)^{1/2}=C\|\mu\|_{Carl}^{1/2}\|N({\bf w})\|^2_{L^2(Q^*)}.
\end{eqnarray}

The terms $\widetilde{\mbox{I}}$ and $\widetilde{\mbox{II}_1}$ contain both a derivative acting on $H$ and a derivative acting on $b_{ij}w_j$. We deal with these in two parts: (a) when the derivative acting on $H = \sum b_{nj}w_j$ falls on $b_{nj}$ and (b) when it falls on $w_j$. First we deal with case (b). When the derivative acting on $b_{ij}w_j$ does not hit the coefficients, we can handle them as the corresponding terms I and $\mbox{II}_1$. When this derivative falls on the coefficients, the term we get from $\widetilde{\mbox{I}}$ is again of the same nature as (\ref{e41}) and the term we get from $\widetilde{\mbox{II}_1}$ looks like (\ref{e7}), so these terms are handled as before. Finally we deal with case (a), where we either get terms of the form (\ref{e7}), which we have dealt with before, or terms of the form
\begin{equation} \label{form1}
\iint_{Q^*\times(0,2r)} |{\bf w}|^2|\nabla B|^2 t \xi dX \lesssim \|\mu\|_{Carl} \int_{Q^*} N_r^2(\nabla u) d\sigma.
\end{equation}
This concludes the proof as
$$\|\nabla u(.,\phi(.))\|^2_{L^2(Q)}\le C\sum_{i=1}^n\int_{{\mathbb R}^{n-1}}w_i(x,0)^2\xi_1(x)dx.$$
\end{proof}

From now on we follow the stopping time argument from \cite{KKPT},
in particular our Lemma \ref{l4} is an analogue of \cite[Lemma 3.8]{KKPT}. For any continuous function ${\bf v}:\R^n_+\to \R^n$
and $\nu\in \R$ we define
$$h_{\nu,a}({\bf v})(x)=\sup\{t\ge 0;\, \sup_{\Gamma_a(x,t)}|{\bf v}|>\nu\}.$$
Here $\Gamma_a(x,t)$ is a cone with vertex at $(x,t)$ (recall that
the boundary point is $(x,0)$). Hence
$$\Gamma_a(x,t)=(0,t)+\Gamma_a(x,0),$$
is the non-tangential cone $\Gamma_a(x,0)$ shifted in the direction
$(0,t)$.

\begin{lemma}\label{l5} If ${\bf v}$ is such that
$h_{\nu,a}({\bf v})<\infty$ then $h_{\nu,a}({\bf v})$ is Lipschitz
with constant $1/a$.
\end{lemma}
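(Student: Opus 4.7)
The plan is a direct geometric argument from the definition of $h_{\nu,a}(\mathbf{v})$. Recall $\Gamma_a(x,t) = (0,t) + \Gamma_a(x,0)$ is just the boundary cone at $(x,0)$ shifted upward by $t$, and $h_{\nu,a}(\mathbf{v})(x)$ is the largest such vertical shift for which the cone still meets the set $\{|\mathbf{v}| > \nu\}$. The whole proof hinges on one containment of cones at different base points, which is what forces the Lipschitz constant $1/a$.

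First I would establish the cone inclusion: for all $x, y \in \R^{n-1}$ and all $t \ge |x-y|/a$,
\[
\Gamma_a(x, t) \;\subseteq\; \Gamma_a\!\left(y,\; t - \tfrac{|x-y|}{a}\right).
\]
Indeed, if $(z, r) \in \Gamma_a(x, t)$ then, with the cone aperture normalised so that membership is equivalent to $|z - x| \le a(r - t)$, the triangle inequality gives
\[
|z - y| \le |z - x| + |x - y| \le a(r - t) + |x - y| = a\!\left(r - \bigl(t - \tfrac{|x-y|}{a}\bigr)\right),
\]
which is exactly the condition that $(z, r) \in \Gamma_a(y, t - |x-y|/a)$.

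Next I would transfer this to $h$. Without loss of generality $h_{\nu,a}(\mathbf{v})(x) \ge h_{\nu,a}(\mathbf{v})(y)$. For any $\varepsilon > 0$ the supremum defining $h_{\nu,a}(\mathbf{v})(x)$ provides $t \in (h_{\nu,a}(\mathbf{v})(x) - \varepsilon,\, h_{\nu,a}(\mathbf{v})(x)]$ and a point $(z,r) \in \Gamma_a(x,t)$ with $|\mathbf{v}(z,r)| > \nu$. Provided $t - |x-y|/a \ge 0$, the cone inclusion puts $(z,r)$ inside $\Gamma_a(y,\, t - |x-y|/a)$, which forces
\[
h_{\nu,a}(\mathbf{v})(y) \;\ge\; t - \tfrac{|x-y|}{a} \;\ge\; h_{\nu,a}(\mathbf{v})(x) - \varepsilon - \tfrac{|x-y|}{a}.
\]
Letting $\varepsilon \downarrow 0$ and then swapping $x$ and $y$ yields $|h_{\nu,a}(\mathbf{v})(x) - h_{\nu,a}(\mathbf{v})(y)| \le |x-y|/a$. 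The only thing to check separately is the degenerate case $t - |x-y|/a < 0$: then $h_{\nu,a}(\mathbf{v})(x) \le t + \varepsilon < |x-y|/a + \varepsilon$, and since $h_{\nu,a}(\mathbf{v})(y) \ge 0$ the desired estimate again holds after $\varepsilon \downarrow 0$. There is no real obstacle here; the whole argument is a one-line geometric observation about shifted non-tangential cones combined with the definition of the supremum, and the finiteness hypothesis $h_{\nu,a}(\mathbf{v}) < \infty$ is used only to ensure that we are comparing finite quantities.
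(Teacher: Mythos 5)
Your argument is correct. Note that the paper does not actually prove this lemma: it simply cites \cite[Lemma 3.13]{KKPT}, and your direct cone-inclusion argument is precisely the standard proof that reference contains, so you have filled in what the paper leaves to a citation. The only point worth flagging is the aperture normalisation: you take membership of $(z,r)$ in $\Gamma_a(x,t)$ to mean $|z-x|\le a(r-t)$, which yields exactly the constant $1/a$, whereas with the paper's Definition~\ref{dgamma} the cone $\Gamma_a(x,0)$ in $\R^n_+$ is $\{(z,r):|z-x|\le\sqrt{a^2+2a}\,r\}$, so the identical computation gives the (better) constant $1/\sqrt{a^2+2a}\le 1/a$ and the stated conclusion still follows; your handling of the degenerate case $t-|x-y|/a<0$ and the implicit convention $h_{\nu,a}({\bf v})\ge 0$ are both fine.
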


\begin{proof} See, for example \cite[Lemma 3.13]{KKPT}.
\end{proof}

We also have an analogue of \cite[3.14]{KKPT}.

\begin{lemma}\label{l6} Under same assumptions on $u$ and $L$ as in Lemma \ref{l4}
set ${\bf v}=\nabla u$ and let $(Q_j)_j$ be a Whitney decomposition of
$\{x;\, N_{[a]}({\bf v})(x)>\nu/24\}$. Given $a>0$, let $E^j_{\nu,\rho}$
be the intersection of the cube $Q_j$ with
$$\{x;\, N_{[a/12]}({\bf v})(x)>\nu\mbox{ and } \|\mu\|^{1/2}_{Carl} N_{[a]}({\bf v})(x)+S_{[a]}({\bf v})(x)\le\rho\nu\}.$$
Then there exist a sufficiently small choice of $\rho$,
independent of $Q_j$ so that for each $x\in E^j_{\nu,\rho}$ there
is a cube $R$ with $x\in 6R$ and $R\subset Q^*_j$ for which
$$|{\bf v}(z,h_{\nu,a/12}({\bf v})(z))|>\nu/2$$
for all $z\in R$.
\end{lemma}

\begin{proof} Let $x\in E^j_{\nu,\rho}$. By definition $h_{\nu,a/12}({\bf
v})(x)>0$ and so there exists a $Y$ on
$\partial\Gamma_{a/12}(x,h_{\nu,a/12}({\bf v})(x))$ such that
$|{\bf v}(Y)|=\nu$ (here $Y=(y,y_n)$) and $h_{\nu,a/12}({\bf
v})(y)=y_n$. Let $r_0=y_n$ and
$$K=\Gamma_{a/12}(x,0)\cap\{Z;\,|z_n-y_n|<r_0/6\}.$$
Since $Q_j$ is a Whitney cube, $r_0\le (1+4\sqrt{n-1})\ell(Q_j)/a$, and we also have
$$3K\subset \Gamma_a(x,0) \quad \mbox{and} \quad \mbox{dist}(3K,\partial\R^n_+) \geq r_0/2.$$
Hence again by \cite[Thm 8.17]{GT} we have that

\begin{eqnarray}
\nonumber&& \mbox{osc}_{K}({\bf v})\le C(r_0^{-n/2}\|{\bf v}-{\bf
c}\|_{L^2(2K)}+r_0^{1-n/q}\|(\nabla A){\bf v}\|_{L^q(2K)}),
\end{eqnarray}
for any constant ${\bf c}$ and $q>n$. By (\ref{carl}) $|(\nabla
A){\bf v}|(Z)\le Cr_0^{-1}\|\mu\|^{1/2}_{Carl} N_{[a]}({\bf v})(x)$ for $Z\in
2K$, so
$$r_0^{1-n/q}\|(\nabla A){\bf v}\|_{L^q(2K)}\le C\|\mu\|^{1/2}_{Carl} N_{[a]}({\bf v})(x)$$
and so using Poincar\'e's inequality
\begin{eqnarray}
\nonumber |{\bf v}(Z)-{\bf v}(Y)|&\le& \mbox{osc}_{K}({\bf v})\le
C(r_0^{1-n/2}\|\nabla{\bf v}\|_{L^2(3K)}+\|\mu\|^{1/2}_{Carl} N_{[a]}({\bf
v})(x))
\end{eqnarray}
\begin{eqnarray} \nonumber &\le& C(S_{[a]}({\bf v})(x)+\|\mu\|^{1/2}_{Carl}
N_{[a]}({\bf v})(x))\le C\rho\nu,
\end{eqnarray}
for any $Z\in K$. Thus we may choose $\rho$ sufficiently small so
that $|{\bf v}(Z)-{\bf v}(Y)|\le \nu/2$. Then clearly $|{\bf
v}(z,h_{\nu,a/12}({\bf v})(z))|>\nu/2$ for $|z-y|\le ar_0/72$.
\end{proof}

%We are now set to establish a good-$\lambda$ inequality analogous to \cite[Lemma 3.15]{KKPT}. Since the proof is routine and essentially follows \cite{KKPT} we state the result without proof.
%
%\begin{lemma}\label{goodlambda} Fix a cube $R$. Let $u$ be a solution to $Lu=\div A\nabla u=0$, where $L$ is an elliptic differential operator defined on $\R^n_+$ with bounded coefficients which are such that (\ref{carl}) is the density of a Carleson measure on all Carleson boxes of size at most $r_0$ with norm $\varepsilon$. Set ${\bf v}=\nabla u$ and let $(Q_j)_j$ be a Whitney decomposition of $\{x;\, N({\bf v})(x)>\nu/24\}$.
%
%Given $a>0$, let $F^j$ be the intersection of the cube $Q_j$ with
%
%\begin{eqnarray}\nonumber&&
%\{x;\, N_{a/12}({\bf v})(x)>\nu,\,[\varepsilon {\mathcal M}(N({\bf
%v})^2)(x)+{\mathcal M}(S({\bf
%v})^2)(x)]^{1/2}\le\rho\nu\}\\
%\nonumber&&\cap\,\{x;\,[{\mathcal M}(N({\bf
%v})^2)(x)]^{1/4}[{\mathcal M}(S({\bf v})^2)(x)]^{1/4}\le\rho\nu\}.
%\end{eqnarray}
%
%Here ${\mathcal M}$ is the Hardy-Littlewood maximal function
%applied to functions restricted to $25R$, that is
%$${\mathcal M}(f)= M(\chi_{25R}f),$$
%where $M$ is the usual Hardy-Littlewood maximal function and $\chi_{25R}$ is the
%characteristic function of the set $25R$. Then there exists
%constant $c(\rho)$ independent of $j$ such that for all $\nu>0$
%$$|F_j|\le c(\rho)|Q_j|,$$
%provided $Q_j\subset R$. Moreover $c(\rho)$ tend to zero as
%$\rho\to 0$.
%\end{lemma}
%
Finally, the results of this section can be converted to the following result.

\begin{lemma}\label{NPlNS} Under the same assumption as in Lemma \ref{l1}
there exists $\varepsilon>0$ depending only on the geometry
of the domain $\Omega$, the ellipticity constant $\Lambda$, dimension $n$
and $p$ such that if $\|\mu\|_{Carl}<\varepsilon$ then
\begin{eqnarray}
\label{e42} &&\int_{\partial\Omega}N^p_{r/2}(\nabla u)\,dx\le
K\int_{\dom}S_r^p(\nabla u)
dx+\iint_{\Omega\setminus\Omega_{r/2}}|\nabla u|^p\,dX.
\end{eqnarray}
Here $K=K(\Omega,\Lambda,p,n)>0$. $N_h$ and
$S_h$ are truncated versions of non-tangential maximal function
and square function, respectively.
\end{lemma}

\noindent{\it Remark.} The term
$\iint_{\Omega\setminus\Omega_{r/2}}|\nabla u|^p\,dX$ is necessary
if $\Omega$ is a bounded domain. Consider for example $L=\Delta$
on $\Omega\subset\R^n$. Let $u$ be a harmonic function in
$\Omega$. Then for any vector ${\bf c}$ we have that $S(\nabla
u)=S(\nabla (u+{\bf c}\cdot{\bf x}))$ but clearly $N(\nabla u)\ne
N(\nabla (u+{\bf c}\cdot{\bf x}))$. This term is not necessary if
the domain is unbounded and we consider untruncated versions of
the non-tangential maximal function and the square
function.\vglue2mm

\begin{proof} We only highlight the major points of the proof as
the basic idea is the same as in \cite{KKPT}. Applying standard techniques as in \cite[Lemma 3.15]{KKPT}
the stopping time function $h$, Lemmas \ref{l4} and \ref{l6} can be combined into the following good-$\lambda$
inequality.
\[\begin{split}
&\sigma\big(\{x':\, {N}_{[a/12]}(\nabla u)>\nu,\, (M(S^2_{[a]}(\nabla u)))^{1/2}\le\gamma\nu,\,(M(S^2_{[a]}(\nabla u))M({N}_{[a]}^2(\nabla u)))^{1/4}\le\gamma\nu,\\&\quad
(M(\|\mu\|_{Carl}N^2_{[a]}(\nabla u)))^{1/2}\le\gamma\nu
\}\big)\le C(\gamma)\sigma\left(\{x':\, {N}_{[a/12]}(\nabla u)>\nu/32\}\right),
\end{split}\]
for all $\gamma<1$ with $C(\gamma)\to 0$ as $\gamma\to 0$.

Note that Lemma \ref{l4} requires the Lipschitz function $\phi$ to
have a small Lipschitz norm. Since we are using the function
$h_{\nu,a/12}({\bf v})$ in place of $\phi$, if we choose $a>0$
large enough by Lemma \ref{l5} the Lipschitz norm will be small. 

Having the good-$\lambda$ inequality \eqref{e42} follows for $p>2$ immediately by a standard argument (see the discussion above Theorem 3.18 of \cite{KKPT}). Seemingly the term $(M(\|\mu\|_{Carl}N^2_a(u)))^{1/2}\le\gamma\nu$ in the good-$\lambda$ might be problematic, when converting the inequality into 
\eqref{e42} for $p>2$. However, what saves the days is the fact that this term will contribute a factor
$\|\mu\|_{Carl}\|N(\nabla u)\|^p_{L^p}$ which when $\|\mu\|_{Carl}$ is small can be absorbed in the estimate. 

Furthermore as in \cite[Theorem 3.18]{KKPT}  the global result for $p>2$ implies a local version of the  estimate \eqref{e42} also holds for some $p>p_0$. Finally the local estimate for all $p>1$ then follows by a standard argument from the local one for some $r>p_0$. See \cite{FSt} for full details. 
\end{proof}
%%%%%%%%%%%%%%%%%%%%%%%%%%%%%%%%%%%%%%%%
\section{The $(R)_2$ Regularity Problem}

\begin{theorem}\label{L2reg} Let $\Omega\subset {\mathbb R}^n$ be a
bounded Lipschitz domain with Lipschitz norm $\ell$ and $L=\mbox{div}(A\nabla \cdot)$ be an uniformly elliptic
differential operator defined on $\Omega$ with ellipticity
constant $\Lambda$ and coefficients such that (\ref{carl})
is a Carleson measure with norm $\|\mu\|_{Carl,r_0}$ on Carleson regions of size at most $r_0$. Then there exists
$\varepsilon=\varepsilon(\Lambda,n)>0$ such that if
$\max\{\ell,\|\mu\|_{Carl,r_0}\}<\varepsilon$ then the regularity problem
\begin{eqnarray}
\nonumber &&Lu=0,\qquad\mbox{in }\Omega,\\
\nonumber &&u=f,\qquad\hskip2mm\mbox{on }\partial\Omega,\\
\nonumber &&N(\nabla u)\in L^2(\partial\Omega),
\end{eqnarray}
is solvable for all $f$ with $\|\nabla_T
f\|_{L^2(\partial\Omega)}<\infty$. Moreover, there exists a
constant $C=C(\Lambda,n,a)>0$ such that
\begin{equation}
\|N(\nabla u)\|_{L^2(\partial\Omega)}\le C\|\nabla_T
f\|_{L^2(\partial\Omega)}.\label{mest}
\end{equation}
\end{theorem}

\begin{proof} For any $f$ in the Besov space
$B^{2,2}_{1/2}(\partial\Omega)$ the exists a unique
$H_1^{2}(\Omega)$ solution by the Lax-Milgram theorem. Observe
that $f\in H_1^{2}(\partial\Omega)\subset
B^{2,2}_{1/2}(\partial\Omega)$ so it only remains to establish the
estimate (\ref{mest}).

Consider $\varepsilon>0$ and take $\|\mu\|_{Carl,r_0}<\varepsilon$. From now on we drop the subscript $r_0$. To keep matters simple let us first consider the case when
$\partial\Omega$ is smooth. In this case Lemma \ref{l1} %and \ref{l2}
applies directly. If follows that for all small $r$
\begin{eqnarray} \label{e44}
&&\int_{\partial\Omega}S^2_{r/2}(\nabla u)\,d\sigma\le
K\left[\int_{\dom} |\nabla_T
u|^2\,d\sigma+\|\mu\|_{Carl}\int_{\dom}N_r^2(\nabla u)
d\sigma+\frac{1}{r}\|\nabla u\|^2_{L^2(\Omega)}\right].
\end{eqnarray}
We now choose $\varepsilon$ small enough such that Lemma \ref{NPlNS} holds. It
follows that by (\ref{e42})
\begin{eqnarray} \nonumber
\int_{\partial\Omega}N_{r/4}^2(\nabla u)\,d\sigma\le
\widetilde{K}\left[\int_{\dom} |\nabla_T
u|^2\,d\sigma+\|\mu\|_{Carl}\int_{\dom}N_r^2(\nabla u)
d\sigma+\frac{1}{r}\|\nabla u\|^2_{L^2(\Omega)}\right].
\end{eqnarray}

We also observe that we have a pointwise estimate
\begin{equation}\label{e46}
N_{r}^2(\nabla u)(X)\le N_{r/4}^2(\nabla
u)(X)+C(r)\iint_{\Omega_{r/8}}|\nabla u(Y)|^2\,dY
\end{equation}
for all $X\in \partial\Omega$. This is easy as we are
estimating $|\nabla u|$ away from the boundary. Hence, by the
Carleson condition we have $|\nabla A|\le \|\mu\|^{1/2}_{Carl}/r$ there. A
standard bootstrap argument using the fact that ${\bf v}=\nabla u$
satisfies the equation $L{\bf v}=\div((\nabla A){\bf v})$ 
yields pointwise bounds on $|\nabla u|$ for
$\{X\in\partial\Omega;\,\mbox{dist}(X,\partial\Omega)\in
[r/4,r]\}$. Finally, using (\ref{e46}) we
obtain
\begin{eqnarray} \label{e47}
\int_{\partial\Omega}N_{r}^2(\nabla u)\,d\sigma\le
\widetilde{K}\Bigg[\int_{\dom} |\nabla_T
u|^2\,d\sigma&+&\|\mu\|_{Carl}\int_{\dom}N_r^2(\nabla u)
d\sigma\Bigg]\\\nonumber&+&C(r)\|\nabla
u\|^2_{L^2(\Omega)}.
\end{eqnarray}
We now can make our final choice of $\varepsilon$. We choose it
sufficiently small such that the constant in (\ref{e47})
$K\|\mu\|_{Carl}<1/2$ which yields
\begin{eqnarray} \label{e48}
\int_{\partial\Omega}N_{r}^2(\nabla u)\,d\sigma\le
2\widetilde{K}\int_{\dom} |\nabla_T
u|^2\,d\sigma+2C(r)\|\nabla
u\|^2_{L^2(\Omega)}.&
\end{eqnarray}
From this the desired estimate follows since the term $\|\nabla
u\|^2_{L^2(\Omega)}$, i.e., an $H^2_1(\Omega)$ estimate of the
solution $u$, follows from Lax-Milgram.\vglue2mm

Now we turn to the more general case, when $\Omega$ has a Lipschitz
boundary with sufficiently small Lipschitz constant $\ell$. This case
also includes the $C^1$ boundary as in such case $\ell$ can be taken
arbitrary small.

The crucial point is that the proofs of
Lemmas~\ref{l1}-\ref{NPlNS} in the smooth case are based on local
estimates near boundary $\partial\Omega$. 
We refer to \cite{BZ}, in particular Theorem 5.1 and Remark 5.3, for the construction of approximations of
Lipschitz domains $\Omega$ by smooth domains $\Omega_{\epsilon}$ via bi-Lipschitz homeomorphisms where the Lipschitz constant is independent of $\epsilon$.
This transforms the original equation on $\Omega$ to a new elliptic equation on $\Omega_{\epsilon}$ with coefficients satisfying a Carleson condition of the same order
of magnitude.
\end{proof}
%%%%%%%

\noindent {\it Remark }   We claim that the assumption that the domain has a Lipschitz boundary with small Lipschitz constant can be replaced by the assumption that the boundary
is given locally by a function whose gradient has small BMO norm.  
If the boundary locally coincides with $\{(x,t)\in\R^{n}; t>\phi(x)\}$, we use the fact that 
the map $\Phi$ in  (\ref{eDKS}) is a bijection between the sets
$\R_+^{n}$ and $\{(x,t)\in\R^{n}; t>\phi(x)\}$ provided that $c$ is chosen to be larger that  $ \|\nabla\phi\|_{BMO}$.
Hence by pulling back everything
(metric, coefficients) using $\Phi$ we are left with proving local
estimates on a subset of $\R_+^{n}$. We now have to estimate how much the
Carleson norm of the coefficients changes when we move from
the set $\{(x,t)\in\R^{n}; t>\phi(x)\}$ to $\R_+^{n}$. A
computation gives us that if the original constant was $\|\mu\|_{Carl}$, the
new Carleson norm on $\R_+^{n}$ will depend on $\|\nabla\phi\|_{BMO}$ and on $\|\mu\|_{Carl}$. From this the claim follows, as 
the new norm will be small as long as both $\|\mu\|_{Carl}$ and $\|\nabla\phi\|_{BMO}$ are small
enough. In particular, this applies to domains whose boundaries are given locally
by functions with gradient in VMO.

\medskip

Finally, we replace the gradient Carleson condition (\ref{carl})
by a weaker condition for oscillation of the coefficients
(\ref{carlM}). This entails that the gradient $\nabla u$ will no
longer have a well-defined {\em pointwise} non-tangential maximal function
$N$. Instead an averaged version $\widetilde{N}$ defined by
(\ref{NTMaxVar}) must be used.

\begin{theorem}\label{L2reg2} Under the same assumptions as in
Theorem \ref{L2reg} the $(R)_2$ regularity problem for the
operator $L$ is solvable under a weaker
Carleson condition (\ref{carlM}).
\end{theorem}

\begin{proof} The proof uses same idea as \cite[Corollary
2.3]{DPP}, so we shall skip non-essential details. The
procedure outlined in \cite{DPP} implies that for a matrix $A$
satisfying (\ref{carlM}) with ellipticity constant $\Lambda$ one
can find (by mollifying coefficients of $A$) a new ``perturbed" matrix
$\widetilde{A}$, with same ellipticity constant $\Lambda$, such that
$\widetilde{A}$ satisfies (\ref{carl}) and such that
\begin{equation}
\sup\{\delta(X)^{-1}|(A-\widetilde{A})(Y)|^2;\, Y\in
B(X,\delta(X)/2)\}\label{ePert}
\end{equation}
is the density of a Carleson measure. Moreover, if the Carleson norm for matrix $A$
is small (on regions of size at most $\le r_0$), then so are the Carleson norms
of (\ref{carl}) for $\widetilde{A}$ and (\ref{ePert}). Hence by
Theorem \ref{L2reg} the $(R)_2$ regularity problem is solvable for
the operator $\widetilde{L}u=\mbox{div}(\widetilde{A}\nabla u)$.

The solvability of the regularity problem for perturbed operators
satisfying (\ref{ePert}) has been studied in \cite{KP2}. It
follows by \cite[Theorem 2.1]{KP2} that the $L^p$ regularity
problem for the operator $L$ is solvable for some $p>1$. The $p$
for which the solvability of the regularity problem is assessed is
the $p$ such that the $L^{p'}$, $p'=p/(p-1)$,  Dirichlet problem
for the adjoint operator $L^*$ is solvable. Although the results
in \cite{KP2} are stated for symmetric operators, a careful
study of the proof of \cite[Theorem 2.1]{KP2} reveals that what is
really needed is to replace $L$ by its adjoint when the $L^{p'}$
Dirichlet problem is considered.

However by \cite[Theorem 2.2]{DPP} the $L^2$ Dirichlet problem for
$L^*$ is solvable provided the Carleson norm of (\ref{carlM}) (and
hence (\ref{carl}) for $\widetilde{A}$) is sufficiently small.
Hence we have solvability of the regularity problem $(R)_2$ by
\cite[Theorem 2.1, Remark 2.3]{KP2}.
\end{proof}
%%%%%%%%%%%%%%%%%%%%%%%%%%%%%%%%%%%%%%%%%%%

\section{The Square Function Revisited}

In this section we revisit bounds for the square function of
$\nabla u$ from the perspective of the Neumann problem. As in
Section 2 we shall assume that $\Omega$ is a smooth domain and we continue to use
the notation we introduced there. Recall that $\Omega_{t_0}$ denotes
the collar neighborhood of the boundary $\partial\Omega\times
(0,t_0)$.

On $\Omega_{t_0}$ we have a well-defined co-normal derivative of
$u$ with respect to the operator $L$; in the metric
$d\sigma\otimes dt$ this is just
$$H=\sum_{i=1}^n a_{ni}\partial_i u,$$
where $(a_{ij})$ are coefficients of the matrix $A$ in local
coordinates near the boundary.\vglue2mm

We have the following key lemma bounding the non-tangential
maximal function of $\nabla u$ by the square function of $H$.

\begin{lemma}\label{NPl1} Let $p\ge 2$. Under the assumptions of Lemma \ref{l1}
there exists $\varepsilon>0$ such that if
$\|\mu\|_{Carl}<\varepsilon$ then for some
$K=K(\Omega,\Lambda,n,p)>0$
\begin{eqnarray} \label{NPe1}
&&\int_{\partial\Omega} N^p(\nabla u)\,dx\le K
\iint_{\Omega_{2r}}|\nabla_T u|^{p-2}|\nabla
H|^2\delta(X)\,dX+C(r)\iint_{\Omega\setminus\Omega_{r/2}}|\nabla
u|^p\,dX.
\end{eqnarray}
\end{lemma}

\begin{proof} We mainly work in the collar neighborhood
$\Omega_{t_0}$ defined above. We choose $r\le t_0/5$. Using the
results we have on the solvability of the regularity problem we
know that for sufficiently small $\varepsilon>0$ we have:
\begin{eqnarray} \label{NPE1}
&&\int_{\partial\Omega} N^p(\nabla u)\,dx\le K
\int_{\partial\Omega} |\nabla_T u|^p\,dx.
\end{eqnarray}

Since $\partial\Omega$ is a smooth compact manifold, there is a
finite collection of balls $Q_1,Q_2,\dots, Q_k$ in $\R^{n-1}$ of
diameter comparable to $r$ and smooth diffeomorphisms
$\varphi_s:5Q_s\to\dom$ such that $\bigcup_s\varphi_s(9/8Q_s)$
covers $\dom$. Here $rQ$ denotes the concentric enlargement of $Q$
by a factor of $r$. Let us also find smooth partition of unity
$\phi_s$ such
$$\sum \phi_s=1\text{ on }\dom,\qquad \phi_s=1\mbox{ on }Q_s\qquad\text{and}\qquad \text{supp }\phi_s\subset{9/8Q_s}.$$

Let us fix $s$ and work on one ball $Q=Q_s$ and
$\xi_1=\phi_s$. We may assume that $|\xi_1'|\le C/r$. Choose
another function $\xi_2:[0,\infty)\to \mathbb R$ such that
$\xi_2(t)=1$ on $[0,r]$, $|\xi_2'|\le 5/r$ and support contained
in $[0,2r]$. Now define
\begin{equation} \label{cutoff}
\xi(X)=\xi(x,t)=\xi_1(x)\xi_2(t).
\end{equation}

We work on estimating righthand side of (\ref{NPE1}) in local
coordinates on $5Q\times (0,5r)$. Denote by $v_k=\partial_ku$ for
$k=1,2,\dots,n$. For each $k\le n-1$ we have
\begin{eqnarray}
\nonumber \int_{{\mathbb
R}^{n-1}}|v_k(x,0)|^p\xi_1(x)dx&=&-\iint_{{\mathbb
R}^{n}_+}\partial_n(|v_k|^p\xi)(X)\,dX\\
\label{NPe2} &=&-p\iint_{{\mathbb
R}^{n}_+}|v_k|^{p-2}v_k(\partial_nv_k)\xi\,dX-\iint_{{\mathbb
R}^{n}_+}|v_k|^p\xi_1\xi_2'\,dX=I+II.
\end{eqnarray}

The second term on the right-hand side of (\ref{NPe2}) is
controlled by $\iint_K|\nabla u|^p$ where $K=\{X=(x,t);x\in
5Q\text{ and }r/2\le t\le 5r\}$. We deal with the first term.
Since $\partial_nv_k=\partial_kv_n$ we have

\begin{eqnarray}
I&=&-p\iint_{{\mathbb R}^{n}_+}|v_k|^{p-2}v_k(\partial_kv_n)\xi\,dX\nonumber\\
&=&-p\iint_{{\mathbb
R}^{n}_+}|v_k|^{p-2}v_k\partial_k\left(\frac{a_{ni}}{a_{nn}}v_i\right)\xi\,dX+p\sum_{i<n}\iint_{{\mathbb
R}^{n}_+}|v_k|^{p-2}v_k\partial_k\left(\frac{a_{ni}}{a_{nn}}v_i\right)\xi\,dX.\label{NPe3}
\end{eqnarray}

The second term of (\ref{NPe3}) can be further written as
\begin{eqnarray}
&&p\sum_{i<n}\iint_{{\mathbb
R}^{n}_+}|v_k|^{p-2}v_k\partial_k\left(\frac{a_{ni}}{a_{nn}}v_i\right)\xi\,dX\nonumber\\
&=&p\sum_{i<n}\iint_{{\mathbb
R}^{n}_+}|v_k|^{p-2}v_kv_i\partial_k\left(\frac{a_{ni}}{a_{nn}}\right)\xi\,dX+\sum_{i<n}\iint_{{\mathbb
R}^{n}_+}\partial_i(|v_k|^{p})\frac{a_{ni}}{a_{nn}}\xi\,dX.\label{NPe4}
\end{eqnarray}
We introduce $(\partial_n t)$ into both the terms of (\ref{NPe4})
and integrate by parts. This gives
\begin{eqnarray}
&&-\sum_{i<n}\left[p\iint_{{\mathbb
R}^{n}_+}\partial_n\left(|v_k|^{p-2}v_kv_i\partial_k\left(\frac{a_{ni}}{a_{nn}}\right)\xi\right)
t\,dX+\iint_{{\mathbb
R}^{n}_+}\partial_n\left(\partial_i(|v_k|^{p})\frac{a_{ni}}{a_{nn}}\xi\right)
t\,dX\right]\nonumber\\
&=&-\sum_{i<n}\left[p\iint_{{\mathbb
R}^{n}_+}\partial_n(|v_k|^{p-2}v_k)v_i\partial_k\left(\frac{a_{ni}}{a_{nn}}\right)\xi
t\,dX+p\iint_{{\mathbb
R}^{n}_+}|v_k|^{p-2}v_k\partial_n(v_i)\partial_k\left(\frac{a_{ni}}{a_{nn}}\right)\xi
t\,dX                 \right. \nonumber\\
&&\hskip10mm +\left. \iint_{{\mathbb
R}^{n}_+}\partial_i(|v_k|^{p})\partial_n\left(\frac{a_{ni}}{a_{nn}}\right)\xi
t\,dX\right]\label{NPe5}\\
&&-\sum_{i<n}\left[p\iint_{{\mathbb
R}^{n}_+}|v_k|^{p-2}v_kv_i\partial_k\left(\frac{a_{ni}}{a_{nn}}\right)\xi_1\xi_2'
t\,dX+\iint_{{\mathbb R}^{n}_+}\partial_i(|v_k|^{p})\frac{a_{ni}}{a_{nn}}\xi_1\xi_2' t\,dX\right]\nonumber\\
&&-\sum_{i<n}\left[p\iint_{{\mathbb
R}^{n}_+}|v_k|^{p-2}v_kv_i\partial_n\partial_k\left(\frac{a_{ni}}{a_{nn}}\right)\xi
t\,dX+\iint_{{\mathbb
R}^{n}_+}\partial_n\partial_i(|v_k|^{p})\frac{a_{ni}}{a_{nn}}\xi
t\,dX\right]\nonumber.
\end{eqnarray}

The last two terms we integrate by parts one more time as we
switch the order of derivatives. This gives
\begin{eqnarray}
&&\sum_{i<n}\left[p\iint_{{\mathbb
R}^{n}_+}\partial_k\left(|v_k|^{p-2}v_kv_i\xi\right)\partial_n\left(\frac{a_{ni}}{a_{nn}}\right)
t\,dX+\iint_{{\mathbb
R}^{n}_+}\partial_i\left(\frac{a_{ni}}{a_{nn}}\xi\right)\partial_n
(|v_k|^p) t\,dX\right]\nonumber.\\
&=&\sum_{i<n}\left[p\iint_{{\mathbb
R}^{n}_+}\partial_k(|v_k|^{p-2}v_k)v_i\partial_n\left(\frac{a_{ni}}{a_{nn}}\right)\xi
t\,dX+p\iint_{{\mathbb R}^{n}_+}|v_k|^{p-2}v_k(\partial_k
v_i)\partial_n\left(\frac{a_{ni}}{a_{nn}}\right)\xi
t\,dX                 \right. \nonumber\\
&&\hskip2mm +\left. \iint_{{\mathbb R}^{n}_+}\partial_n
(|v_k|^p)\partial_i\left(\frac{a_{ni}}{a_{nn}}\right)\xi
t\,dX\right]\label{NPe6}\\
&&+\sum_{i<n}\left[p\iint_{{\mathbb
R}^{n}_+}|v_k|^{p-2}v_kv_i\partial_n\left(\frac{a_{ni}}{a_{nn}}\right)(\partial_k\xi_1)\xi_2
t\,dX+\iint_{{\mathbb R}^{n}_+}\partial_n
(|v_k|^p)\frac{a_{ni}}{a_{nn}}(\partial_i\xi_1)\xi_2
t\,dX\right]\nonumber
\end{eqnarray}

The first three terms on the righthand side of both (\ref{NPe5}) and
(\ref{NPe6}) can be bounded from above by
\begin{eqnarray}
&&C\iint_{2Q\times[0,2r]}|\nabla u|^{p-1}|\nabla^2 u||\nabla
A|t\,dX\label{NPe10}\\
&\le& \left(\iint_{2Q\times[0,2r]}|\nabla u|^{p-2}|\nabla^2
u|^2t\,dX\right)^{1/2}\left(\iint_{2Q\times[0,2r]}|\nabla
u|^{p}|\nabla
A|^2t\,dX\right)^{1/2}\nonumber\\
&\le& \left(\int_{2Q}N(\nabla u)^{p-2}\iint_{\Gamma(x)}|\nabla^2
u(X)|^2{t}^{2-n}\,dX\,dx\right)^{1/2}\left(\iint_{2Q\times[0,2r]}|\nabla
u|^{p}|\nabla A|^2t\,dX\right)^{1/2}\nonumber\\
&\le& \left(\int_{2Q}N(\nabla u)^{p-2}S^2(\nabla
u)\,dx\right)^{1/2}\|\mu\|_{Carl}^{1/2} \|N(\nabla
u)\|^{p/2}_{L^p(2Q)}\nonumber\\ &=&\|\mu\|_{Carl}^{1/2}\|S(\nabla
u)\|_{L^p(2Q)}\|N(\nabla u)\|_{L^p(2Q)}^{p-1}.\nonumber
\end{eqnarray}

The fourth term on righthand side of (\ref{NPe5}) can be estimated
by
\begin{eqnarray}
&&C\iint_{2Q\times[r,2r]}|\nabla u|^{p}|\nabla
A|\textstyle{\frac{t}{r}}\,dX\label{NPe11}\\
&\le& \left(\iint_{2Q\times[r,2r]}|\nabla
u|^{p}{\textstyle\frac{t}{r^2}}\displaystyle\,dX\right)^{1/2}\left(\iint_{2Q\times[0,2r]}|\nabla
u|^{p}|\nabla
A|^2t\,dX\right)^{1/2}\nonumber\\\nonumber&\le&\left(\int_{2Q}
N(\nabla u)^p(x)\,dx \right)^{1/2}  \|\mu\|_{Carl}^{1/2} \|N(\nabla
u)\|^{p/2}_{L^p(2Q)}=\|\mu\|_{Carl}^{1/2} \|N(\nabla u)\|^{p}_{L^p(2Q)}.
\end{eqnarray}

The fifth term on righthand side of (\ref{NPe5}) can be estimated
by
\begin{eqnarray}
&&C\iint_{2Q\times[r,2r]}|\nabla u|^{p-1}|\nabla^2
u|\textstyle{\frac{t}{r}}\,dX\nonumber\\
&\le& \left(\iint_{2Q\times[r,2r]}|\nabla
u|^{p}\displaystyle\,dX\right)^{p/(p-1)}\left(\iint_{2Q\times[0,2r]}|\nabla^2
u|^{p}\,dX\right)^{1/p}\label{NPe12}\\
&\le& C(r)\iint_K|\nabla u|^p\,dX.\nonumber
\end{eqnarray}
To get the last line we used some standard elliptic estimates away
from the boundary (for example, it is sufficient to generalise Caccioppoli's inequality to inhomogeneous equations via the proof in \cite[p.~2]{K}). By the Carleson condition we have $|\nabla
A|\le \|\mu\|_{Carl}^{1/2}/r$ there. The rest is a standard bootstrap argument using the
equation ${\bf v}=\nabla u$ satisfies, i.e., $L{\bf
v}=\div((\nabla A){\bf v})$ eventually yielding $L^p$ bounds on
$\nabla\bf v$ in $K$.

We denote the co-normal derivative of $u$ by
$H=\sum_{i}a_{ni}\partial_i u=\sum_i a_{ni}v_i$ and write the
first term of (\ref{NPe3}) as

\begin{eqnarray} \nonumber &&-p\iint_{{\mathbb
R}^{n}_+}|v_k|^{p-2}v_k\partial_k\left(\frac{H}{a_{nn}}\right)\xi\,dX=-p\iint_{{\mathbb
R}^{n}_+}|v_k|^{p-2}v_k\partial_k\left(\frac{H}{a_{nn}}\right)\xi(\partial_n
t)\,dX\\
&=&\nonumber p\iint_{{\mathbb
R}^{n}_+}\partial_n(|v_k|^{p-2}v_k)\partial_k\left(\frac{H}{a_{nn}}\right)\xi
t\,dX+ p\iint_{{\mathbb
R}^{n}_+}|v_k|^{p-2}v_k\partial_k\left(\frac{H}{a_{nn}}\right)\xi_1\xi_2'
t\,dX\\
&&+\label{NPe7} p\iint_{{\mathbb
R}^{n}_+}|v_k|^{p-2}v_k\partial_n\partial_k\left(\frac{H}{a_{nn}}\right)\xi
t\,dX,
\end{eqnarray}
where the last term further yields:
\begin{eqnarray}
&&\label{NPe8} -p\iint_{{\mathbb
R}^{n}_+}\partial_k(|v_k|^{p-2}v_k)\partial_n\left(\frac{H}{a_{nn}}\right)\xi
t\,dX- p\iint_{{\mathbb
R}^{n}_+}|v_k|^{p-2}v_k\partial_n\left(\frac{H}{a_{nn}}\right)(\partial_k\xi_1)\xi_2
t\,dX.
\end{eqnarray}
If the derivative in the first two terms on the righthand side of
(\ref{NPe7}) and (\ref{NPe8}) falls on the coefficients of the
matrix $A$ we obtain terms we have already bounded above (see
(\ref{NPe10}) and (\ref{NPe11})). If the derivative falls on $H$
the first term on the righthand side of both (\ref{NPe7}) and
(\ref{NPe8}) is bounded by
\begin{eqnarray}
&&\nonumber C\iint_{{\mathbb R}^{n}_+}|v_k|^{p-2}|\nabla
v_k||\nabla H|\xi t\,dX\\
&\le&C\nonumber\left(\iint_{{\mathbb R}^{n}_+}|v_k|^{p-2}|\nabla
v_k|^2\xi t\,dX\right)^{1/2} \left(\iint_{{\mathbb
R}^{n}_+}|v_k|^{p-2}|\nabla H|^2\xi t\,dX\right)^{1/2}\\\nonumber
&\le&C\left(\int_{2Q}N^{p-2}(v_k)(x)\int_{\Gamma(x)}|\nabla
v_k(X)|^2t^{2-n} dX\,dx\right)^{1/2}\left(\iint_{\R^n_+}|\nabla_T
u|^{p-2}|\nabla
H|^2 \xi t\,dX\right)^{1/2}\\
&=&\nonumber C\left(\int_{2Q}N^{p-2}(v_k)(x)
S^2(v_k)(x)\,dx\right)^{1/2}\left(\iint_{\R^n_+}|\nabla_T
u|^{p-2}|\nabla
H|^2\xi t\,dX\right)^{1/2}\\
&=&\nonumber
C\|N(v_k)\|_{L^p(2Q)}^{p/2-1}\|S(v_k)\|_{L^p(2Q)}\left(\iint_{\R^n_+}|\nabla_T
u|^{p-2}|\nabla H|^2 \xi t\,dX\right)^{1/2}.
\end{eqnarray}

If the derivative falls on $H$ in the second term of (\ref{NPe7}),
we get terms of the same form as (\ref{NPe11}) and (\ref{NPe12}).

It follows that for all $k\le n-1$ we have
\begin{eqnarray}
\label{NPe13} &&\int_{{\mathbb R}^{n-1}}v_k(x,0)^p\xi_1(x)dx\\
\nonumber &\le&\nonumber
C\|N(v_k)\|_{L^p(2Q)}^{p/2-1}\|S(v_k)\|_{L^p(2Q)}\left(\iint_{\R^n_+}|\nabla_T
u|^{p-2}|\nabla H|^2\xi t\,dX\right)^{1/2}\\
&&\nonumber+\|\mu\|_{Carl}^{1/2}\|N(\nabla
u)\|_{L^p(2Q)}^{p-1}\left[\|S(\nabla u)\|_{L^p(2Q)}+\|N(\nabla u)\|_{L^p(2Q)}\right]\\
&&\nonumber+
C(r)\iint_K|\nabla u|^p\,dX+E.
\end{eqnarray}
Here $E$ denotes remainder terms; these are the last two terms of
(\ref{NPe6}) and the last term of (\ref{NPe8}) when the derivative
falls on $H$. We now sum (\ref{NPe13}) over all $k\le n-1$ and
also sum over all coordinate patches $Q_s$. We notice that the
error terms $E$ with complete cancel out as $\sum_s (\partial_k
\phi_s)=0$ where $(\phi_s)$ is the partition of unity we
considered above. This yields a global estimate
\begin{eqnarray}
\nonumber &&\int_{\partial\Omega}|\nabla_T u|^pdx\le
C\|N(v_k)\|_{L^p(\partial\Omega)}^{p/2-1}\|S(v_k)\|_{L^p(\partial\Omega)}\left(\iint_{\Omega_{2r}}|\nabla_T
u|^{p-2}|\nabla H|^2\delta(X)\,dX\right)^{1/2}\\\nonumber&&+
\|\mu\|_{Carl}\|N(\nabla
u)\|_{L^p(\partial\Omega)}^{p-1}\left[\|S(\nabla u)\|_{L^p(\partial\Omega)}+\|N(\nabla
u)\|_{L^p(\partial\Omega)}\right]+
C(r)\iint_{\Omega\setminus\Omega_{r/2}}|\nabla u|^p\,dX.
\end{eqnarray}

From this, by (\ref{NPE1}) and using Lemma \ref{lSNp}, we get that
for all sufficiently small $\|\mu\|_{Carl}<\varepsilon$ the desired estimate
(\ref{NPe1}) holds.
\end{proof}

\begin{lemma}\label{NPl2} Let $p\ge 2$ be an integer, $k$ be an integer such that $0\le k\le p-2$.
Under the assumptions of Lemma \ref{l1} there exists $\varepsilon>0$ such that if
$\|\mu\|_{Carl,r_0}<\varepsilon$ then for some constant
$K=K(\Omega,\Lambda,n,k)>0$
\begin{eqnarray} \label{NPe14}
&&\iint_{\Omega_{r}}|\nabla_T u|^{p-k-2}|H|^{k}|\nabla
H|^2\delta(X)\,dX\\&\le&\nonumber
K(p-k-2)\iint_{\Omega_{2r}}|\nabla_T u|^{p-k-3}|H|^{k+1}|\nabla
H|^2\delta(X)\,dX+C(r)\iint_{\Omega\setminus\Omega_{r}}|\nabla
u|^{p}\,dX\\\nonumber&&+K\int_{\partial\Omega}|H|^{p}\,dx.
\end{eqnarray}
\end{lemma}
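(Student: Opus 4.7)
The strategy is to perform an integration by parts on the volume integral $J_k := \iint_{\Omega_r}|\nabla_T u|^{p-k-2}|H|^k|\nabla H|^2\delta(X)\,dX$, transferring one derivative from $|\nabla H|^2$ onto the weight $|\nabla_T u|^{p-k-2}|H|^k$, following the general framework of Lemmas \ref{l1} and \ref{NPl1}. I will work in local coordinates on a coordinate patch $5Q_\ell \times (0,2r)$ with a smooth cutoff $\xi(x,t)=\xi_1(x)\xi_2(t)$ as in \eqref{cutoff}, where $\xi_1$ is the $\ell$-th element of a partition of unity and $\xi_2 = 1$ on $[0,r]$, vanishing at $t = 2r$; denote by $J_k^{\mathrm{loc}}$ the localized version of $J_k$ with the extra factor $\xi$.

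The core step is to expand $|\nabla H|^2=\sum_j(\partial_j H)^2$ and integrate by parts on one factor of $\partial_j H$. All boundary contributions vanish (at $t=0$ through the weight $t$, at $t=2r$ through $\xi_2$, and tangentially through $\xi_1$), giving
\[
J_k^{\mathrm{loc}} = -\iint\sum_j\partial_j\!\left[|\nabla_T u|^{p-k-2}|H|^k(\partial_j H)\,\xi\, t\right]\!H\,dX.
\]
The product rule then produces four types of contributions. \emph{First}, from $\partial_j|\nabla_T u|^{p-k-2}$, a term
\[
I_1 := (p-k-2)\iint|\nabla_T u|^{p-k-4}(\nabla_T u\cdot\partial_j\nabla_T u)(\partial_j H)|H|^k H\,\xi t\,dX,
\]
which by Cauchy-Schwarz satisfies $|I_1|\le (p-k-2)\,T^{1/2}\,(J_{k+1}^{\mathrm{loc}})^{1/2}$ with $T := \iint|\nabla_T u|^{p-k-3}|H|^{k+1}|\nabla\nabla_T u|^2\xi t\,dX$; expanding $\partial_j H = \sum_i[(\partial_j a_{ni})v_i+a_{ni}\partial_j v_i]$ together with the Carleson analysis of Lemma \ref{l1} controls $T$ by $J_{k+1}^{\mathrm{loc}}$ plus Carleson-small and collar errors, so that AM-GM yields $(p-k-2)|I_1|\le (p-k-2)KJ_{k+1}^{\mathrm{loc}}+\text{error}$. \emph{Second}, from $\partial_j|H|^k$ combined with the outer $H$, a term $kJ_k^{\mathrm{loc}}$ absorbed into the left-hand side. \emph{Third}, a term $\iint |\nabla_T u|^{p-k-2}|H|^k H\Delta H\,\xi t\,dX$, handled by rewriting $\partial_n H = -\sum_{i<n}\partial_i(\sum_j a_{ij}v_j)$ (from $Lu=0$) and integrating by parts further. \emph{Fourth}, from $\partial_n t = 1$ in the $j=n$ summand, the term $\iint|\nabla_T u|^{p-k-2}|H|^k H(\partial_n H)\,\xi\,dX$.

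For this fourth term I use $H\partial_n H = \tfrac12\partial_n(H^2)$ and integrate by parts once more in the $n$-direction; this is the step producing the decisive boundary contribution $\tfrac12\int_{\partial\Omega}|\nabla_T u|^{p-k-2}|H|^{k+2}\xi_1\,d\sigma$ at $t=0$. Young's inequality with conjugate exponents $p/(p-k-2)$ and $p/(k+2)$ dominates this by $C\int_{\partial\Omega}|H|^p+C\int_{\partial\Omega}|\nabla_T u|^p$, and the $|\nabla_T u|^p$ piece is in turn controlled using Lemma \ref{NPl1} together with the iteration of \eqref{NPe14} for $k=0,1,\dots,p-2$ in the proof of the main $L^p$ Neumann theorem. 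Cutoff errors from $\nabla\xi$ are supported in $\{r\le t\le 2r\}$ or the corresponding $x$-annulus, and absorbed into $C(r)\iint_{\Omega\setminus\Omega_r}|\nabla u|^p\,dX$ via Cauchy-Schwarz and the elliptic bootstrap of Lemma \ref{l3loc}; the tangential cutoff errors cancel after summing over the partition of unity, thanks to $\sum_\ell\partial_j\phi_\ell=0$.

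The main obstacle will be the careful bookkeeping of the numerous terms generated by the repeated integrations by parts, and in particular processing the mixed boundary quantity $\int_{\partial\Omega}|\nabla_T u|^{p-k-2}|H|^{k+2}\,d\sigma$ in a way consistent with the downstream iteration of the lemma for $k = 0,1,\ldots,p-2$, so that the final right-hand side contains only the advertised $K\int_{\partial\Omega}|H|^p$ rather than any residual $\int_{\partial\Omega}|\nabla_T u|^p$ contribution.
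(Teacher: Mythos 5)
Your skeleton (integrate by parts on the weighted square of $\nabla H$, extract the boundary term $\int_{\dom}|\nabla_T u|^{p-k-2}|H|^{k+2}\,d\sigma$ and the $(p-k-2)J_{k+1}$ term, cancel tangential cutoff errors over the partition of unity) matches the paper, but your third contribution hides a genuine gap. You expand $|\nabla H|^2=\sum_j(\partial_jH)^2$ and are left with $\iint WH\,\Delta H\,\xi t\,dX$, where $W=|\nabla_Tu|^{p-k-2}|H|^k$, and you propose to handle it via the equation and further integration by parts. But the equation only converts the \emph{normal} second derivative $\partial_n^2H$; the tangential part $\sum_{j<n}\partial_j^2H$ still contains genuine third derivatives of $u$ (e.g.\ $a_{nn}\partial_j^2\partial_nu$), and integrating by parts in $\partial_j$ simply regenerates $-\iint W(\partial_jH)^2\xi t\,dX$, i.e.\ the step is circular and yields no control of the tangential part of $J_k$. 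Even the normal part, after using $\partial_nH=-\sum_{i<n}\partial_i(a_{ik}v_k)$ and one tangential integration by parts, produces terms of the form $\iint |\nabla u|^{p-2}|\nabla^2u|^2\,t\,dX$ carrying no factor of $|\nabla A|$ or $|\nabla H|$; these are comparable to $\|N(\nabla u)\|_p^{p-2}\|S(\nabla u)\|_p^2\simeq K^2\|N(\nabla u)\|_p^p$ with a constant that is \emph{not} small and therefore cannot be absorbed. The paper's essential device, absent from your proposal, is to replace the Euclidean form by $b_{ij}(\partial_iH)(\partial_jH)$ with $b_{ij}=a_{ji}/a_{nn}$ (so $b_{nn}=1$): then the integration by parts produces $\widetilde LH=\div(B\nabla H)$, and the algebraic identity (\ref{NPe26}) makes all third-derivative terms cancel exactly, leaving in (\ref{NPe27})--(\ref{NPe28}) only terms with at least one factor $|\nabla A|$ or $|\nabla B|$, which are Carleson-small.

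A second, structural point: the lemma is proved in the paper by induction on $k$. The working inequality (\ref{NPe16a}) carries $\int_{\dom}N^p(\nabla u)\,d\sigma$ on its left-hand side (obtained from Lemma \ref{NPl1} and the cases $0,\dots,k-1$ of (\ref{NPe14})), and this is what absorbs the many $\eta\|N(\nabla u)\|_p^p$ errors, including the $|\nabla_Tu|^p$ piece of your boundary term. You relegate this to a ``downstream iteration in the Neumann theorem,'' but it is needed \emph{inside} the proof of the present lemma; without setting up the induction first, the boundary term and the various $\eta\|N\|_p^p$ errors have nowhere to go. Finally, your claim that $T=\iint|\nabla_Tu|^{p-k-3}|H|^{k+1}|\nabla\nabla_Tu|^2\xi t\,dX$ is ``controlled by $J_{k+1}$'' is not right as stated ($T$ involves $\nabla\nabla_Tu$, not $\nabla H$); the correct route is $T\lesssim\|N\|_p^{p-2}\|S\|_p^2$ followed by $(p-k-2)T^{1/2}J_{k+1}^{1/2}\le\eta\|N\|_p^{p-2}\|S\|_p^2+C(\eta)(p-k-2)^2J_{k+1}$, with the first summand absorbed on the left as above.
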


\begin{proof} We will establish (\ref{NPe14}) by induction on $k$.
If $k=0$ by Lemma \ref{NPl1} we have:
\begin{eqnarray} \label{NPe15}
&&\int_{\partial\Omega} N^{p}(\nabla u)\,dx\le K
\iint_{\Omega_{r}}|\nabla_T u|^{p-2}|\nabla
H|^2\delta(X)\,dX+C(r)\iint_{\Omega\setminus\Omega_{r/2}}|\nabla
u|^p\,dX.
\end{eqnarray}

For $k>0$ we use (\ref{NPe1}) and the induction assumption
(\ref{NPe14}) for all indices $0,1,\dots,k-1$. This gives

\begin{eqnarray} \nonumber
\int_{\partial\Omega} N^{p}(\nabla u)\,dx&\le& K
\iint_{\Omega_{r}}|\nabla_T u|^{p-k-2}|H|^{k}|\nabla
H|^2\delta(X)\,dX+C(r)\iint_{\Omega\setminus\Omega_{r/2}}|\nabla
u|^p\,dX\\\label{NPe16}&&+K\int_{\partial\Omega}|H|^{p}\,d\sigma.
\end{eqnarray}
Here $K=K(k)$ and (\ref{NPe16}) holds for all sufficiently small
$\varepsilon>0$. From this, the inequality

\begin{eqnarray} &&\nonumber
\int_{\partial\Omega} N^{p}(\nabla u)\,dx+K
\iint_{\Omega_{r}}|\nabla_T u|^{p-k-2}|H|^{k}|\nabla
H|^2\delta(X)\,dX\\\label{NPe16a} &\le& 2K
\iint_{\Omega_{r}}|\nabla_T u|^{p-k-2}|H|^{k}|\nabla
H|^2\delta(X)\,dX\\&&+2C(r)\iint_{\Omega\setminus\Omega_{r/2}}|\nabla
u|^p\,dX\nonumber+2K\int_{\partial\Omega}|H|^{p}\,d\sigma
\end{eqnarray}
holds when $k=0$ without any further assumptions, and when $k > 0$ under the induction hypotheses (\ref{NPe14}) for indices $0,1,\dots,k-1$.
Let us choose a cutoff function $\xi$ as in (\ref{cutoff}). To control
\begin{eqnarray} \nonumber
\iint_{\Omega_{r}}|\nabla_T u|^{p-k-2}|H|^{k}|\nabla
H|^2\delta(X)\,dX
\end{eqnarray}
it suffices to control
\begin{eqnarray} \nonumber
\iint_{\R^n_+}|\nabla_T
u|^{p-k-2}|H|^{k}b_{ij}(\partial_iH)(\partial_jH)\xi t\,dX=I
\end{eqnarray}
for some matrix $B$ satisfying the
ellipticity condition to be specified later.

We integrate this by parts. This gives
\begin{eqnarray} \nonumber
I&=&-\frac1{k+1} \iint_{\R^n_+}|\nabla_T
u|^{p-k-2}|H|^{k}H\partial_i (b_{ij}\partial_jH)\xi t\,dX\\
&&\nonumber -\frac1{k+1} \iint_{\R^n_+}|\nabla_T
u|^{p-k-2}|H|^{k}Hb_{nj}(\partial_jH)\xi\,dX\\
&&\label{NPe17} -\frac1{k+1} \iint_{\R^n_+}|\nabla_T
u|^{p-k-2}|H|^{k}Hb_{nj}(\partial_jH)(\partial_i\xi) t\,dX\\
&&\nonumber -\frac{p-k-2}{k+1} \iint_{\R^n_+}|\nabla_T
u|^{p-k-4}(\nabla_Tu\cdot
\partial_i(\nabla_T u))|H|^{k}Hb_{nj}(\partial_jH)\xi t\,dX .
\end{eqnarray}
The second term only appears in (\ref{NPe17}) if $i=n$ as
the function $t$ obviously only depends on the variable $x_n=t$.
We first deal with the third term of (\ref{NPe17}) when $i=n$. As $|\xi_2'|\le
2/r$ and $\xi_2'=0$ on $[0,r]$ we have that this term is bounded by
\begin{eqnarray}
\label{NPe17a} \iint_{Q \times [r,2r]}|\nabla_T
u|^{p-k-2}|H|^{k+1}|\nabla
H|\textstyle\frac{t}{r}\displaystyle\,dX &\le& \varepsilon^{1/2}\int_{2Q}
N_{2r}^{p}(\nabla u)\,dx +\\
\nonumber &&C(r)\iint_{\Omega\setminus\Omega_r}|\nabla u|^p\,dX,
\end{eqnarray}
since this term is of same type as (\ref{NPe11}) and (\ref{NPe12})
it can be estimated as before. Now for the terms with $i < n$ in
the third term of (\ref{NPe17}) we observe that they will cancel
when we sum over the index $s$ in the partition of unity
introduced via the cutoff function $\xi$ from (\ref{cutoff}). We
choose the matrix $B$ so that $b_{nn}=1$. Then the second term of
(\ref{NPe17}) if $j=n$ looks like
\begin{eqnarray} &&\nonumber -\frac{1}{(k+1)(k+2)} \iint_{\R^n_+}|\nabla_T
u|^{p-k-2}(\partial_n|H|^{k+2})\xi\,dX\\
&=&\nonumber -\frac{1}{(k+1)(k+2)}
\iint_{\R^n_+}\partial_n(|\nabla_T
u|^{p-k-2}|H|^{k+2}\xi)\,dX\\
&&+\label{NPe18}\frac{1}{(k+1)(k+2)}
\iint_{\R^n_+}\partial_n(|\nabla_T
u|^{p-k-2})|H|^{k+2}\xi\,dX\\
&&\nonumber +\frac{1}{(k+1)(k+2)} \iint_{\R^n_+}|\nabla_T
u|^{p-k-2}|H|^{k+2}\xi'\,dX.
\end{eqnarray}
Here the last term again can be estimated by a solid integral
$C(r)\iint_{\Omega\setminus\Omega_r}|\nabla u|^p\,dX$ in the
interior of the domain. The first term is equal to a boundary
integral
\begin{eqnarray} \nonumber \frac{1}{(k+1)(k+2)}
\int_{\partial\Omega}|\nabla_T u|^{p-k-2}|H|^{k+2}\,dX\le \eta
\|\nabla_T
u\|^p_{L^p(\partial\Omega)}+C(\eta)\|H\|^p_{L^p(\partial\Omega)},
\end{eqnarray}
for $\eta>0$ arbitrary small. Note that
$$\eta
\|\nabla_T u\|^p_{L^p(\partial\Omega)}\le\eta \|N(\nabla
u)\|^p_{L^p(\partial\Omega)}.$$ We choose $\eta>0$ so small that
we can hide the term $\eta \|N(\nabla
u)\|^p_{L^p(\partial\Omega)}$ on lefthand side of
(\ref{NPe16a}).

It remains to deal with the second term of (\ref{NPe18}). We
differentiate and change the order of derivatives $\partial_n$ and
$\nabla_T$:
\begin{eqnarray} &&\label{NPe19}\frac{p-k-2}{(k+1)(k+2)}
\iint_{\Omega_{2r}}|\nabla_T u|^{p-k-4}(\nabla_Tu\cdot
\nabla_T\partial_n u)|H|^{k+2}\xi \,dX.
\end{eqnarray}
We reintroduce the co-normal derivative $H$ as $\partial_n
u=\frac{H}{a_{nn}}-\sum_{j<n}\frac{a_{nj}}{a_{nn}}v_j$. We also
insert a term $(\partial_n t)=1$ into both integrals. Then we
integrate by parts again in the $\partial_n$ derivative. Whenever
exactly one derivative falls on the coefficients (either $a_{nn}$
or $\frac{a_{nj}}{a_{nn}}$) those terms are bounded by
\begin{equation}\label{NPe20}
\iint_{2Q\times[0,2r]}|\nabla A||\nabla u|^{p-1}|\nabla^2 u|t\,dX
\end{equation}
which is the term of type (\ref{NPe10}) and has therefore a bound
of type\newline $\varepsilon^{1/2}\|S(\nabla u)\|_{L^p(2Q)}\|N(\nabla
u)\|_{L^p(2Q)}^{p-1}$, with $\varepsilon$ bounding the
Carleson norm of the coefficients. For sufficiently small $\varepsilon$, thanks to
Lemma \ref{lSNp}, this can be hidden on the lefthand side of
(\ref{NPe16a}).

If both $\partial_n$ and $\nabla_T$ derivative fall on
coefficients, there are two possibilities. The first possibility
is that they fall on the same coefficient and so then we do a
further integration by parts in $\nabla_T$ moving this derivative
on other terms. This again will yield term of type (\ref{NPe20}).
The second possibility is that they fall on separate coefficients and so take
the form (\ref{form1}), which can be estimated appropriately with
the help of Lemma \ref{lSNp}. We obtain another error term when
$\partial_n$ falls on $\xi$, however in that case we get a term of
type (\ref{NPe17a}) we handled before. Let us deal with the term
when both derivatives fall on $H$. In that case we have
\begin{eqnarray} &&\label{NPe21}-\frac{p-k-2}{(k+1)(k+2)}
\iint_{\Omega_{2r}}\frac1{a_{nn}}|\nabla_T
u|^{p-k-4}(\nabla_Tu\cdot \nabla_T\partial_n H)|H|^{k+2}\xi t\,dX.
\end{eqnarray}
We move the $\nabla_T$ derivative off $\partial_n H$. We can get a
term of type (\ref{NPe20}) and two terms that can be dominated by
\begin{eqnarray} &&\label{NPe22}C(p-k-2)
\iint_{\Omega_{2r}}|\nabla_T u|^{p-k-4}|\nabla(\nabla_Tu)||\nabla
H||H|^{k+2} t\,dX\\
&+&C(p-k-2)\nonumber\iint_{\Omega_{2r}}|\nabla_T u|^{p-k-3}|\nabla
H|^2|H|^{k+1} t\,dX.
\end{eqnarray}
Also, when $\nabla_T$ lands on $\xi$ we get error terms which will cancel when we sum over coordinate patches. Observe also that the last term of (\ref{NPe17}) can be controlled by
\begin{equation} \label{nov1}
C(p-k-2) \iint_{\Omega_{2r}}|\nabla_T u|^{p-k-3}|\nabla(\nabla_Tu)||\nabla
H||H|^{k+1} t\,dX
\end{equation}
We now deal with the terms arising from
$-\sum_{j<n}\frac{a_{nj}}{a_{nn}}v_j$. Here we write
\[
\nabla_T\left(\sum_{j<n}\frac{a_{nj}}{a_{nn}}v_j\right) = \sum_{j<n}\nabla_T\left(\frac{a_{nj}}{a_{nn}}\right)\partial_ju + \sum_{j<n}\frac{a_{nj}}{a_{nn}}\partial_j(\nabla_Tu).
\]
The contribution of the first term here, when substituted in \eqref{NPe19}, can be dealt with by again introducing the factor $\partial_n t$ and integrating by parts. When $\partial_n$ lands on $\nabla_T(a_{nj}/a_{nn})$, we can move the tangential derivates off by again integrating by parts. All this yields terms of the form \eqref{form1} (with exponent $p$ instead of $2$) and \eqref{NPe20}, which can be controlled appropriately. Substituting the second term in \eqref{NPe19} yields
\begin{eqnarray} &&\label{NPe23}\frac{1}{(k+1)(k+2)}\sum_{j<n}
\iint_{\Omega_{2r}}\frac{a_{nj}}{a_{nn}}\partial_j(|\nabla_T
u|^{p-k-2})|H|^{k+2}\xi (\partial_n t)\,dX.
\end{eqnarray}
Moving $\partial_n$ across using integration by parts and if necessary moving $\partial_j$
we obtain terms either bounded by \eqref{form1} (with exponent $p$ instead of $2$), \eqref{NPe20}, \eqref{NPe22} or \eqref{nov1}. Thus the analysis of the
second term of (\ref{NPe17}) for $j=n$ reduces to controlling \eqref{NPe22} and \eqref{nov1}, a task which we will postpone for now. When $j<n$ in the second
term of (\ref{NPe17}) we again introduce $(\partial_n t)$. This
gives
\begin{eqnarray}
&&\nonumber -\iint_{\Omega_{2r}}|\nabla_T
u|^{p-k-2}b_{nj}\partial_j(|H|^{k+2})\xi (\partial_n t)\,dX
\end{eqnarray}
We integrate by parts. When $\partial_n$ falls on $|\nabla_T
u|^{p-k-2}$ we can dominate such a term by \eqref{nov1},
when $\partial_n$ falls on $b_{nj}$ we obtain a terms of type \eqref{form1} (with exponent $p$ instead of $2$) and
\eqref{NPe20} and, provided we choose matrix $B$ so that coefficients
of $B$ also satisfy the Carleson condition. If $\partial_n$ hits
$\xi$ we get terms which can be bounded by \eqref{NPe11} and \eqref{NPe12}. Finally the remaining term
is
\begin{eqnarray}
&&\nonumber \iint_{\Omega_{2r}}|\nabla_T
u|^{p-k-2}b_{nj}\partial_j\partial_n(|H|^{k+2})\xi t\,dX.
\end{eqnarray}
We integrate by parts again in $\partial_j$ giving us terms
of type \eqref{form1} (with exponent $p$ instead of $2$), \eqref{NPe20}, \eqref{NPe22} and \eqref{nov1}. The only remaining terms
we have not yet bounded are the first term of \eqref{NPe17},
\eqref{NPe22} and \eqref{nov1}. The second term of (\ref{NPe22}) is already of
desired form (see righthand side of (\ref{NPe14})). By the
Cauchy-Schwarz inequality, the first term of (\ref{NPe22}) can be bounded by
\begin{eqnarray} \nonumber C(p-k-2)&&\left(
\iint_{\Omega_{2r}}|\nabla_T u|^{p-k-3}|\nabla
H|^2|H|^{k+1} t\,dX\right)^{1/2}\times\\
&&\left( \iint_{\Omega_{2r}}|\nabla_T
u|^{p-k-5}|\nabla(\nabla_Tu)|^2|H|^{k+3}
t\,dX\right)^{1/2}\nonumber\\
\le C(p-k-2)&&\left( \iint_{\Omega_{2r}}|\nabla_T
u|^{p-k-3}|\nabla H|^2|H|^{k+1} t\,dX\right)^{1/2}\|N(\nabla
u)\|^{p/2-1}_{L^p(\partial\Omega)}\|S(\nabla
u)\|_{L^p(\partial\Omega)}.\nonumber
\end{eqnarray}
The last line can be further bounded by
$$\eta\|N(\nabla
u)\|^{p}_{L^p(\partial\Omega)} +
C(\eta)(p-k-2)^2\iint_{\Omega_{2r}}|\nabla_T u|^{p-k-3}|\nabla
H|^2|H|^{k+1} t\,dX,$$ for $\eta>0$ arbitrary small. Hence as
before we can hide $\eta\|N(\nabla u)\|^{p}_{L^p(\partial\Omega)}$
on the lefthand side of (\ref{NPe16a}). Term \eqref{nov1} can be dealt with in a very similar fashion. We summarize what we have
so far. By (\ref{NPe16a}) and all estimates above we have

\begin{eqnarray} &&\nonumber
\alpha\int_{\partial\Omega} N^{p}(\nabla u)\,d\sigma+
\iint_{\Omega_{r}}|\nabla_T u|^{p-k-2}|H|^{k}|\nabla
H|^2\delta(X)\,dX\\\label{NPe24} &\le& K(p-k-2)
\iint_{\Omega_{r}}|\nabla_T u|^{p-k-3}|H|^{k+1}|\nabla
H|^2\delta(X)\,dX\\&&+2C(r)\iint_{\Omega\setminus\Omega_{r/2}}|\nabla
u|^p\,dX\nonumber+K\int_{\partial\Omega}|H|^{p}\,d\sigma\\
&&-\frac{K}{k+1} \iint_{\Omega_{2r}}|\nabla_T
u|^{p-k-2}|H|^{k}H(\widetilde{L}H)\xi t\,dX.\nonumber
\end{eqnarray}
Here $\widetilde{L}H=\mbox{ div}(B\nabla H)$ and $\alpha>0$. The
precise value of $\alpha$ depends on choice of $\eta>0$ above and
$\varepsilon>0$. Clearly, (\ref{NPe24}) is the desired estimate
(\ref{NPe14}) modulo the last extra term we shall consider
now.\vglue2mm

As above we use the summation convention, we only write the sum explicitly
whenever we do not sum over all indices. For $\widetilde{L}H$ we
have
\begin{eqnarray} &&\nonumber \widetilde{L}H=\partial_i(b_{ij}\partial_j
H)=\sum_{j<n} \partial_i(b_{ij}\partial_j(a_{nk} \partial_k u))+
\partial_i(b_{in}\partial_n(a_{nk} \partial_k u)).
\end{eqnarray}
Since $Lu=0$ we have that $\partial_n(a_{nk} \partial_k
u)=-\sum_{j<n}\partial_j(a_{jk} \partial_k u)$. Hence
\begin{eqnarray} &&\nonumber \widetilde{L}H=\partial_i(b_{ij}\partial_j
H)=\sum_{j<n} [\partial_i(b_{ij}\partial_j(a_{nk} \partial_k u))-
\partial_i(b_{in} \partial_j(a_{jk} \partial_k u))].
\end{eqnarray}
We also swap the role of $i$ and $k$ in the second term. From this
\begin{eqnarray} &&\label{NPe25}\widetilde{L}H=\partial_i(b_{ij}\partial_j
H)=\sum_{j<n} [\partial_i(b_{ij}\partial_j(a_{nk} \partial_k u))-
\partial_k(b_{kn} \partial_j(a_{ji} \partial_i u))].
\end{eqnarray}
We choose $b_{ij}=a_{ji}/a_{nn}$. Notice that this guarantees that
$b_{nn}=1$ and that terms in (\ref{NPe25}) where three derivatives
fall on $u$ vanish as these are the terms:
\begin{eqnarray} &&\label{NPe26}\sum_{j<n} [b_{ij}a_{nk} (\partial_i\partial_j\partial_k u)-
b_{kn}a_{ji} (\partial_i\partial_j\partial_k
u)]=\sum_{j<n}a_{nn}^{-1}(a_{ji}a_{nk}-a_{nk}a_{ji})\partial_i\partial_j\partial_k
u=0.
\end{eqnarray}

We now place (\ref{NPe25}) into last term of (\ref{NPe24}). Given
(\ref{NPe26}) some of the remaining terms are

\begin{eqnarray} &&\label{NPe27}
\iint_{\Omega_{2r}}\sum_{j<n}[b_{ij}(\partial_i\partial_j
a_{ij})(\partial_k u)-b_{kn}(\partial_k\partial_j
a_{ji})(\partial_i u)]|\nabla_T u|^{p-k-2}|H|^{k}H\xi t\,dX
\end{eqnarray}
and the rest can be bounded by
\begin{eqnarray} &&\label{NPe28}
\iint_{\Omega_{2r}}|\nabla u|^{p-1}[|\nabla u||\nabla A||\nabla
B|+|\nabla^2 u||\nabla A||B|+|\nabla^2 u||\nabla B||A|]t\,dX.
\end{eqnarray}
The terms in (\ref{NPe27}) have two derivatives on coefficients $a_{ij}$
however one is $\partial_j$ and $j<n$. We therefore integrate by
parts in $\partial_j$. This yields additional terms, but all are
of the form (\ref{NPe28}). However, by an estimate similar to
(\ref{NPe20}) we get that all the terms of (\ref{NPe28}) are
smaller than $C(\varepsilon)\int_{\partial\Omega} N^p_{3r}(\nabla
u)\,d\sigma$, with $\varepsilon$ being the upper bound of the Carleson norm of the coefficients. Hence for sufficiently small
$\varepsilon$ this term can be hidden in (\ref{NPe24}) within the
term $\alpha\int_{\partial\Omega} N^{p}(\nabla u)\,dx$. This
yields the desired estimate (\ref{NPe14}).
\end{proof}

%%%%%%%%%%%%%%%%%%%%%%%%%%%%%%%%%%
\section{The $(N)_p$ Neumann Problem}

\begin{theorem}\label{NPL2neu} Let $p\ge 2$ be an integer. Under the assumptions of Theorem \ref{LPreg}
with $L$ satisfying the stronger Carleson condition (\ref{carl}) with norm $\|\mu\|_{Carl,r_0}$ there exists
$\varepsilon=\varepsilon(\Lambda,n,p)>0$ such that if
$\max\{\ell,\|\mu\|_{Carl,r_0}\}<\varepsilon$ then the Neumann problem
\begin{eqnarray}
\nonumber &&Lu=0,\qquad\hskip9mm\mbox{in }\Omega,\\
\nonumber &&A\nabla u\cdot \nu=f,\qquad\mbox{on }\partial\Omega,\\
\nonumber &&N(\nabla u)\in L^p(\partial\Omega),
\end{eqnarray}
is solvable for all $f$ in $L^p(\partial\Omega)<\infty$  with
$\int_{\dom} fd\sigma=0$. Moreover, there exists a constant
$C=C(\Lambda,n,a,p)>0$ such that
\begin{equation}
\|N(\nabla u)\|_{L^p(\partial\Omega)}\le
C\|f\|_{L^p(\partial\Omega)}.\label{NPmest}
\end{equation}
\end{theorem}

\begin{proof}
For any $f$ in the Besov space $B^{2,2}_{-1/2}(\partial\Omega)$
such that $\int_{\dom} fd\sigma=0$ the exists a unique (up to a
constant) $H_1^{2}(\Omega)$ solution by the Lax-Milgram theorem.
Observe that our $f\in L^p(\partial\Omega)\subset
B^{2,2}_{-1/2}(\partial\Omega)$ ($p\ge 2$) so it only remains to
establish the estimate (\ref{NPmest}).

Consider $\varepsilon>0$ and take $\|\mu\|_{Carl,r_0}<\varepsilon$. To keep matters simple let us first consider the case when
$\partial\Omega$ is smooth. In this case Lemmas \ref{NPl1} and
\ref{NPl2} apply directly. If follows that for all small $r$ and
$\varepsilon>0$
\begin{eqnarray} \label{NPe44}
&&\int_{\partial\Omega}N^p(\nabla u)\,d\sigma\le K\int_{\dom}
|A\nabla u\cdot\nu|^p\,d\sigma+C(r)\|\nabla
u\|^p_{L^p(\Omega\setminus\Omega_{r})}.
\end{eqnarray}

Here we are using Lemma \ref{NPl2} for $k=0,1,2,\dots,p-2$ while
observing that for the integer $k=p-2$, the first term on the
righthand side of (\ref{NPe14}) is zero. As $A\nabla u\cdot\nu=f$
we have for non-tangential maximal function
\begin{eqnarray} \label{NPe45}
&&\int_{\partial\Omega}N^p(\nabla u)\,d\sigma\le K\int_{\dom}
|f|^p\,d\sigma+C(r)\|\nabla
u\|^p_{L^p(\Omega\setminus\Omega_{r})}.
\end{eqnarray}

We also observe that we have a pointwise estimates on $\nabla
u(X)$ for all $X$ away from the boundary. There, by the Carleson
condition, we have $|\nabla A|\le \|\mu\|_{Carl,r_0}^{1/2}/r$. The rest is a standard
bootstrap argument using the equation ${\bf v}=\nabla u$
satisfies, i.e., $L{\bf v}=\div((\nabla A){\bf v})$ eventually
yielding pointwise bound on $|\nabla u|$ for
$\{X\in\partial\Omega;\,\mbox{dist}(X,\partial\Omega)\ge r\}$.

This yields
\begin{eqnarray} \label{NPe47}
\|\nabla u\|^p_{L^p(\Omega\setminus\Omega_{r})}\le
C(p)\|u\|^p_{H^{2}_1(\Omega)}\le
C(p)\|f\|^p_{B^{2,2}_{-1/2}(\partial\Omega)}.
\end{eqnarray}

Finally, combining (\ref{NPe45}) and (\ref{NPe47}) we obtain the
desired estimate (\ref{NPmest}).\vglue2mm

Now we turn to the more general case, when $\Omega$ has a Lipschitz
boundary with sufficiently small Lipschitz constant $\ell$. This case
also includes the $C^1$ boundary as in this case $\ell$ can be taken
arbitrary small.

The argument here is the same as the one given in the proof of Theorem \ref{L2reg}.
\end{proof}

\bigskip

\bigskip

\vspace{.2cm}

\begin{tabular}{lll}
Martin Dindo\v{s} & Jill Pipher & David Rule \\
School of Mathematics & Brown University & Link\"opings universitet \\
Edinburgh University & Mathematics Department & Matematiska institutionen \\
Mayfield Road & Box 1917 & 581 83 Link\"oping\\
Edinburgh, EH9 3JZ, UK & Providence, RI 02912, USA & Sweden  \\
{\small \tt m.dindos@ed.ac.uk} & {\small \tt jill pipher@brown.edu} & {\small \tt david.rule@liu.se}

\hfill

\end{tabular}
\end{document}